\newcommand{\lld}{\underline{\dim}_{\mathrm{loc}}}
\renewcommand{\geq}{\geqslant}
\renewcommand{\leq}{\leqslant}
\renewcommand{\epsilon}{\varepsilon}
\newtheorem{thm}{Theorem}[section]
\newtheorem{lma}[thm]{Lemma}
\newtheorem{cor}[thm]{Corollary}
\newtheorem{prop}[thm]{Proposition}
\newtheorem{ques}[thm]{Question}
\numberwithin{equation}{section}
\begin{document}

\title[Assouad  spectra]{Assouad type spectra for some fractal families}


\author[J. M. Fraser]{Jonathan M. Fraser}
\address{Mathematical Institute\\ The University of St Andrews\\ St Andrews\\ KY16 9SS\\ Scotland }
\urladdr{http://www.mcs.st-and.ac.uk/~jmf32/}
\curraddr{}
\email{jmf32@st-andrews.ac.uk}
\thanks{The work of JMF was partially supported by a \emph{Leverhulme Trust Research Fellowship}.  This work began  while both authors were at the University of Manchester and they are grateful for the inspiring atmosphere they enjoyed during their time there.  They are grateful to Thomas Jordan for helpful discussions.}

\author[H. Yu]{Han Yu}
\address{Mathematical Institute\\ The University of St Andrews\\ St Andrews\\ KY16 9SS\\ Scotland }
\curraddr{}
\email{hy25@st-andrews.ac.uk}
\thanks{}

\date{}

\subjclass[2010]{Primary: 28A80.  Secondary: 37C45, 82B43}

\keywords{Assouad dimension, Assouad spectrum, self-affine carpets, self-similar sets, Mandelbrot percolation, Moran constructions.}

\begin{abstract} In a previous paper we introduced a new `dimension spectrum', motivated by the Assouad dimension,  designed to give precise information about the scaling structure and homogeneity of a metric space.   In this paper we compute the spectrum explicitly for a range of well-studied fractal sets, including: the self-affine carpets of Bedford and McMullen, self-similar and self-conformal sets with overlaps, Mandelbrot percolation, and Moran constructions.  We find that the spectrum behaves differently for each of these models and can take on a rich variety of forms.  We also consider some applications, including the provision of new bi-Lipschitz invariants and bounds on a family of `tail densities' defined for subsets of the integers.
\end{abstract}

\maketitle

\section{Assouad type spectra and organisation of paper}

The Assouad dimension is an important concept in geometric measure theory and analysis.  Roughly speaking it quantifies the `thickest' part of a given metric space, and appears naturally in a wide variety of contexts, such as embedding theory.  Indeed, proving his celebrated embedding theorem was Assouad's main motivation for studying and popularising the concept during his PhD \cite{assouadphd}.    Generally, we will be interested in `how big' a set $F$ is at a small scale $r>0$ (as $r \to 0$).  This is made precise by the covering function   $N(F,r)$ which is defined to be  the minimal number of $r$-balls required to cover $F$.   For example, the (upper) box dimension   extracts the smallest exponent $\alpha$ such that  $N(F,r) \leq  (1/r)^\alpha$ as $r \to 0$. The Assouad dimension, on the other hand,  considers two scales $R>r>0$.  The large scale $R$ focuses  ones attention on a small $R$-ball $B(x,R)$ centered in $E$ and then one    extracts the smallest exponent $\alpha$ such that $N(B(x,R) \cap F,r) \leq C (R/r)^{\alpha}$ for some constant $C$ and for all centres  and pairs of scales.  In \cite{Spectra16} we proposed the following modification of the Assouad dimension where one \emph{fixes} the relationship between the two scales.   More precisely, we define the \emph{Assouad spectrum} of a nonempty set $F$ in a metric space to be the function
\begin{eqnarray*}
\theta  \ \mapsto \ \dim_{\mathrm{A}}^\theta F &=& \inf \bigg\{ \alpha \  : \   (\exists C>0) \, (\exists \rho>0) \, (\forall 0<R\leq \rho) \,  (\forall x \in F) \\ \\
&\,& \qquad \qquad  \qquad \qquad   N \big( B(x,R) \cap F ,R^{1/\theta} \big) \ \leq \ C \left(\frac{R}{R^{1/\theta}}\right)^\alpha \bigg\}
\end{eqnarray*}
where  $\theta$ varies over  $(0,1)$.  If  one replaced $R^{1/\theta}$ by a `free scale' $r \in (0,R)$, then one obtains the familiar Assouad dimension, which we denote by $\dim_{\mathrm{A}}F$.  We also consider a `dual spectrum', which is motivated by the lower dimension.  The  \emph{lower spectrum} is given by the function
\begin{eqnarray*}
\theta  \ \mapsto \  \dim_\text{L}^\theta F &=&   \sup \bigg\{ \alpha \  : \   (\exists C>0) \, (\exists \rho>0) \, (\forall 0<R\leq \rho) \,  (\forall x \in F) \\ \\
&\,& \qquad \qquad  \qquad \qquad   N \big( B(x,R) \cap F,R^{1/\theta} \big) \ \geq \ C \left(\frac{R}{R^{1/\theta}}\right)^\alpha \bigg\}
\end{eqnarray*}
where  $\theta$ varies over  $(0,1)$.  Again, if  one replaced $R^{1/\theta}$ by a `free scale' $r \in (0,R)$, then one obtains the familiar lower dimension, which we denote by $\dim_{\mathrm{L}}F$.   We refer the reader to  \cite{Spectra16} for a detailed discussion of the basic properties of the Assouad and lower spectra.  We will sometimes refer to specific results from \cite{Spectra16}  when we require them, but we summarise the most important properties in the following section.

The key motivation behind these Assouad type spectra is that they provide more detailed and precise information about the scaling structure of the space.  In particular, one obtains a spectrum of exponents, rather than a single one.    In this paper we will explicitly compute the spectra in a variety of important contexts. What we find is that the spectra display a wide variety of different features and forms, reflecting the differences between the  models we consider.  Specifically, our main results are contained in four virtually stand alone sections where we consider: self-affine carpets (Section \ref{CarpetsSect}), self-similar and self-conformal sets with overlaps (Section \ref{Self-similarSect}), Mandelbrot percolation (Section \ref{MandelbrotSect}), and Moran constructions (Section \ref{MoranSection}).

\section{Notation and preliminaries}

We begin with a summary of some notation we will use throughout the paper. For real-valued functions $f,g$, we write $f(x)\lesssim g(x)$ to mean    that there exists a universal constant $M>0$,    independent of $x$, such that $f(x)\leq Mg(x)$. Some readers may be more familiar with the notation $f(x) = O(g(x))$, which is sometimes more convenient and means the same thing.   Similarly, $f(x)\gtrsim g(x)$    means that $f(x)\geq Mg(x)$ with a universal constant independent of $x$.  If both $f(x)\lesssim g(x)$ and $f(x)\gtrsim g(x)$, then we write $f(x)\asymp g(x)$.  Generally one should think of $x$ as being the tuple consisting of all variables in    the expression `$f(x)$'.

For a real number $a$,    we write $a^+$ to denote    a real number that is strictly larger than $a$ but can be chosen as close to $a$ as we wish. Similarly, we write $a^-$ to denote a real number that is strictly less than $a$ but can be chosen as close to $a$ as we wish.  For real numbers $a,b$, we write $a \wedge b$ for the minimum of the two numbers    and $a \vee b$ for the maximum.  Also, for a non-negative real number $x \geq 0$, we write $[x]$ for the integer part of $x$.

The Assouad and lower spectra are closely related to the upper and lower box dimension.  The \emph{upper} and \emph{lower box dimensions} of a totally bounded set $F$ are defined by
\[
\overline{\dim}_{\mathrm{B}}F  \ = \  \limsup_{R \to 0} \   \frac{\log  N \big( F, R\big) }{-\log R} \qquad \text{ and } \qquad   \underline{\dim}_\mathrm{B} F  \ = \  \liminf_{R \to 0} \   \frac{\log  N \big( F, R\big) }{-\log R}
\]
and if the upper and lower box dimensions coincide we call the common value the \emph{box dimension} of $F$ and denote it by $\dim_\mathrm{B} F$.   Note that, similar to the Assouad and lower spectra, the covering function $N(\cdot, R)$ can be taken to be numerous related covering or packing functions and it is sometimes convenient to use a variety of different definitions in practice.  In particular we note the following general relationships which hold for any totally bounded set $F$:
\[
 \dim_\text{L} F \      \leq   \   \underline{\dim}_\text{B} F   \   \leq   \  \overline{\dim}_\text{B} F  \   \leq   \ \dim_\text{A} F.
\]
For more detailed information on dimension theory and the basic properties of the various dimensions we consider here, we refer the reader to \cite{Fraser, Luukkainen, Falconer, Robinson}.  In \cite{Spectra16} we conducted a thorough investigation of the analytic and geometric properties of the Assouad and lower spectra.  Some of the key properties we will use in this paper are as follows:
\begin{enumerate}
\item \cite[Propositions 3.1  and 3.10]{Spectra16}.   For any totally bounded $F$ and $\theta \in (0,1)$ one has the following general bounds:
\[
 \dim_\text{L} F \      \leq   \  \dim_\text{L}^\theta F \   \leq   \  \underline{\dim}_\text{B} F  
\]
and
\[
\overline{\dim}_\text{B} F  \    \leq   \  \dim_\text{A}^\theta F   \   \leq   \   \frac{\overline{\dim}_\text{B} F}{1-\theta}   \wedge  \dim_\text{A} F.
\]
\item \cite[Corollaries 3.5 and 3.11]{Spectra16}.   The Assouad and lower spectra are continuous in $\theta$.
\item \cite[Corollaries 4.9 and 4.10]{Spectra16}. The Assouad and lower spectra are invariant under bi-Lipschitz and quasi-bi-Lipschitz maps. 
\end{enumerate}

\section{Self-affine carpets}  \label{CarpetsSect}

Self-affine sets are an important class of fractal subsets of Euclidean space.  They are attractors of finite iterated function systems consisting of affine contractions.  Due to the fact that affine maps can scale by different amounts in different directions, the dimension theory of self-affine sets is much more complicated than their self-similar counter parts; where the defining maps are similarities.  Self-affine sets  have been studied intensively  for over 30 years and have important connections with many other fields, such as non-conformal dynamical systems.  An  important contribution to this area was Bedford and McMullen's independent treatments of a restricted class of self-affine sets \cite{Bedford, McMullen}.  These `Bedford-McMullen carpets' are planar self-affine sets where the affine maps all have the same linear part; which is a simple diagonal matrix.  These sets are much simpler than general self-affine sets, but still display many of the key features: the maps scale by different amounts in the two principle directions and so the constituent pieces become increasingly elongated as one iterates the construction.  Due to their simple and explicit form, the dimension theory of Bedford-McMullen carpets is rather well-developed.  Indeed, Bedford and McMullen computed their Hausdorff and box dimensions in the mid-1980s, Mackay computed their Assouad dimensions in 2011 \cite{Mackay}, and Fraser computed their lower dimensions in 2014 \cite{Fraser}.  In this section we give explicit formulae for the Assouad and lower spectra.

We begin by recalling the construction. Fix $m,n \in \mathbb{N}$ with $2\leq m<n$ and divide the unit square $[0,1]^2$ into an $m \times n$  regular grid.  Let $\mathcal{I} = \{0, \dots, m-1\}$ and $\mathcal{J} = \{0,\dots, n-1\}$ and label the $mn$ rectangles in the regular grid by $\mathcal{I} \times \mathcal{J}$ counting from bottom left to top right.  Choose a subset of the rectangles $\mathcal{D} \subseteq \mathcal{I} \times \mathcal{J}$ of size at least 2 and for each $d = (i,j) \in \mathcal{D}$, associate a contraction $S_d: [0,1]^2 \to [0,1]^2$ defined by
\[
S_d(x,y) = (x/m+i/m, \,  y/n+j/n).
\]
Let
\[
\mathcal{D}^\infty = \{ \mathbf{d} = (d_1, d_2, \dots) : d_l = (i_l,j_l) \in \mathcal{D} \}
\]
be the set of infinite words over $\mathcal{D}$ and let $\Pi :  \mathcal{D}^\infty \to [0,1]^2$ be the canonical map from the symbolic space to the geometric space defined by
\[
\Pi(\mathbf{d}) \ =  \ \bigcap_{l \in \mathbb{N}} \, S_{d_1} \circ \cdots \circ  S_{d_l} \big([0,1]^2\big).
\]
The set $F = \Pi(\mathcal{D}^\infty)$ is the Bedford-McMullen carpet.  We thus obtain a symbolic coding of the set $F$, but we note that some points may not have a unique code. The set $F$ can be viewed in several other contexts.  For example, $F$ is the attractor of the \emph{iterated function system} (IFS) $\{S_d\}_{d \in \mathcal{D}}$, i.e. it is the unique non-empty compact set satisfying
\[
F \ = \ \bigcup_{d \in \mathcal{D}} S_d(F).
\]
In the context of hyperbolic dynamical systems, Bedford-McMullen carpets are also of particular interest.  Identifying $[0,1]^2$ with the 2-torus in the natural way, $F$ is a (forward and backward) invariant repeller for the hyperbolic toral endomorphism 
\[
(x,y) \to (mx \hspace{-2mm} \mod 1, \, ny\hspace{-2mm}  \mod 1).
\] 

\begin{figure}[H] 
	\centering
	\includegraphics[width=125mm]{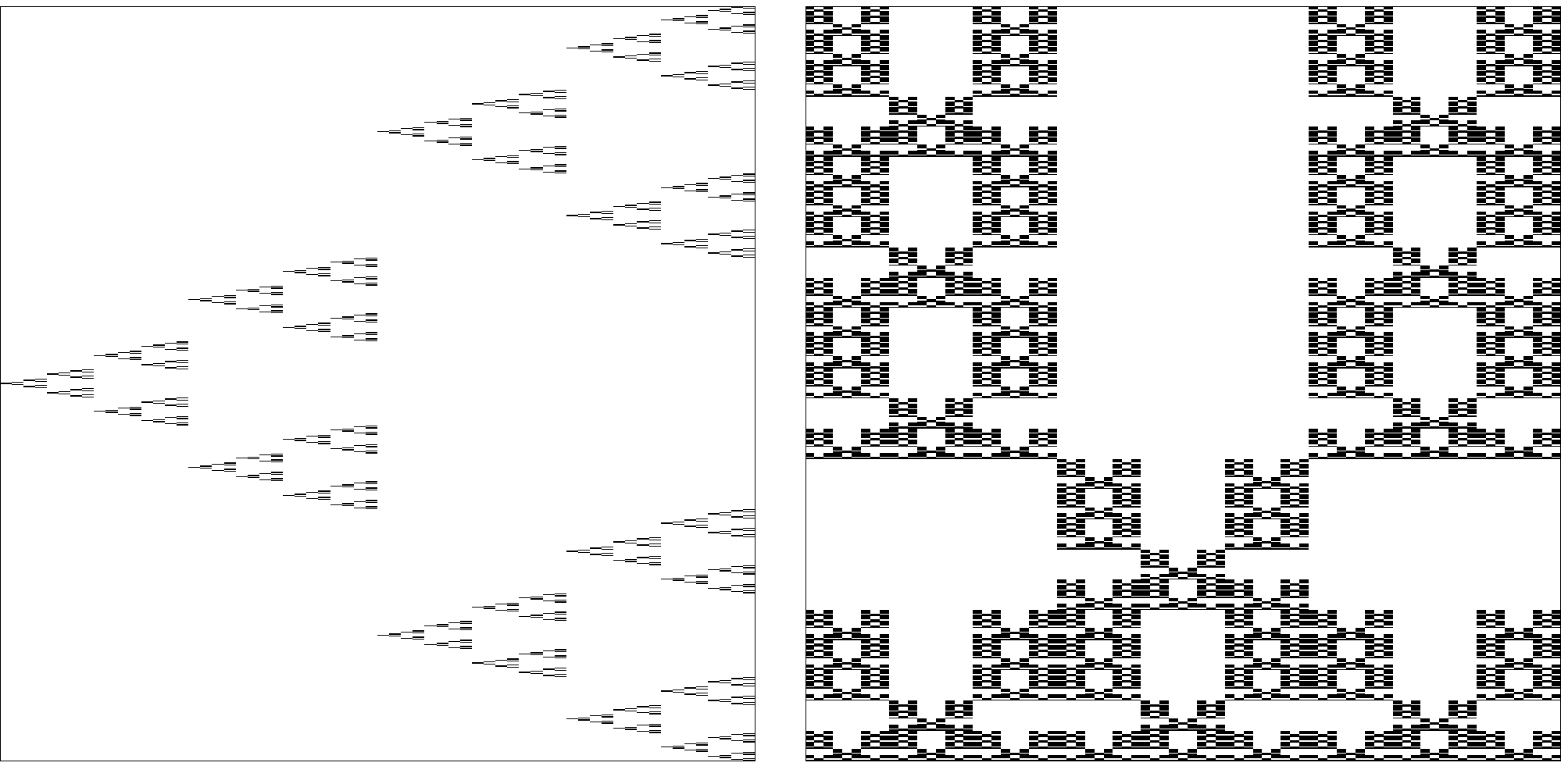}
\caption{Two examples of self-affine carpets of the type introduced by Bedford and McMullen.  On the left $m=2$ and $n=3$ and on the right $m=3$ and $n=5$.\label{carpets}}
\end{figure}

In order to state the dimension results mentioned above we need some more notation.   Let $\pi : \mathcal{I} \times \mathcal{J} \to \mathcal{I}$ be the projection onto the first coordinate.  For $i \in \mathcal{I}$, let
\[
C_i  \ =  \ \lvert \{(i',j') \in \mathcal{D} : i'=i\} \rvert  \ = \  \lvert \pi^{-1}(i) \cap  \mathcal{D} \rvert
\]
be the number of chosen rectangles in the $i$th column, let $C_{\max} = \max_{i \in \mathcal{I}} C_i$ and $C_{\min} = \min_{i \in \pi \mathcal{D}} C_i$.
\begin{thm}[Bedford-McMullen-Mackay-Fraser] \label{carpetdimensions}
The Assouad,  box, Hausdorff and lower dimensions of a Bedford-McMullen carpet $F$ are given by 
\[
\dim_\text{\emph{A}} F \ =  \  \frac{\log \lvert\pi\mathcal{D} \rvert }{ \log m} \, + \,  \frac{ \log C_{\max}}{ \log n},
\]
\[
\dim_\text{\emph{B}} F \ =  \  \frac{\log \lvert\pi\mathcal{D} \rvert }{ \log m} \, + \,  \frac{ \log(\lvert\mathcal{D} \rvert/ \lvert \pi\mathcal{D} \rvert )}{ \log n}
\]
\[
\dim_\mathrm{H} F \ = \ \frac{\log \sum_{i \in \mathcal{I}} C_i^{\log m/\log n}}{\log m}
\]
and
\[
\dim_\text{\emph{L}} F \ =  \  \frac{\log \lvert\pi\mathcal{D} \rvert }{ \log m} \, + \,  \frac{ \log C_{\min}}{ \log n}.
\]
\end{thm}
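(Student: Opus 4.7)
The plan is to treat all four formulas via a common framework based on the \emph{approximate square} decomposition. At level $k$, each word $(d_1,\dots,d_k)\in\mathcal{D}^k$ determines a cylinder rectangle of dimensions $m^{-k}\times n^{-k}$; since $m<n$ these are wider than tall. At the natural cover scale $r=n^{-k}$ one subdivides each such cylinder into roughly $(n/m)^k$ vertical strips of width approximately $n^{-k}$, or equivalently one iterates to level $\ell(k):=\lceil k\log n/\log m\rceil$ in the $x$-direction while stopping at level $k$ in the $y$-direction. The resulting \emph{approximate squares} have both side lengths comparable to $n^{-k}$. A direct count shows
\[
N(F, n^{-k}) \ \asymp \ |\pi\mathcal{D}|^{\ell(k)} \cdot \bigl(|\mathcal{D}|/|\pi\mathcal{D}|\bigr)^k,
\]
since each non-empty level-$\ell(k)$ column contains on average $(|\mathcal{D}|/|\pi\mathcal{D}|)^k$ non-empty approximate squares stacked vertically. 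Taking logarithms and dividing by $k\log n$ yields the box dimension formula, and this cover is the foundation for the three remaining calculations.

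For the Assouad dimension, by a standard interpolation it suffices to take $R=n^{-k}$ and $r=n^{-K}$ with $K>k$ and consider $B(x,R)$ centred on $F$. The generic cover produced above provides roughly $(R/r)^{\dim_{\mathrm{B}} F}$ squares, but the Assouad dimension extracts worst-case configurations: choosing $x$ so that the vertical line through $B(x,R)$ passes through a column of maximal multiplicity $C_{\max}$ at every level between $k$ and $K$ forces an additional factor of $C_{\max}^{K-k}$ in the $y$-direction (such an $x$ exists because one can iterate indefinitely into the column realising $C_{\max}$). The matching upper bound follows from the pointwise inequality $C_i\le C_{\max}$. The lower dimension formula emerges from the dual argument using $C_{\min}$; the only subtlety is that one must restrict the minimum to $\pi\mathcal{D}$, because a ball centred on $F$ necessarily lies above a \emph{non-empty} column.

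For the Hausdorff dimension, the upper bound again follows from the approximate square cover combined with a Billingsley-type weighting. The lower bound requires the self-affine Bernoulli measure $\mu$ on $\mathcal{D}^\infty$ with weights
\[
p_{(i,j)} \ = \ \frac{C_i^{\log m/\log n-1}}{\sum_{i'\in\pi\mathcal{D}} C_{i'}^{\log m/\log n}},
\]
chosen so that the $\mu$-mass of a typical approximate square at scale $n^{-k}$ is comparable to $n^{-ks}$ with $s$ equal to the stated Hausdorff dimension. A short Lagrange multiplier computation confirms that these weights optimise the entropy functional $H(q)/\log m+\sum_i q_i\log C_i/\log n$ on the $x$-marginal, and the mass distribution principle delivers the lower bound.

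The main obstacle is this Hausdorff lower bound: identifying the correct measure is not mechanical, since the two iteration stages have incommensurable natural scales $\log m$ and $\log n$, and the Legendre-type exponent $\log m/\log n$ appearing in the weights is precisely what balances them into a measure of constant local dimension on approximate squares. The Assouad and lower dimension calculations, by contrast, reduce cleanly to the combinatorial task of locating the heaviest or lightest column and iterating it.
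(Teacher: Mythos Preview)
The paper does not actually prove this theorem: it is stated as a collection of known results, with the Hausdorff and box formulas attributed to Bedford \cite{Bedford} and McMullen \cite{McMullen}, the Assouad formula to Mackay \cite{Mackay}, and the lower dimension formula to Fraser \cite{Fraser}. So there is no ``paper's own proof'' to compare against here.

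That said, your sketch is broadly correct and follows the standard route taken in those original references. The approximate-square decomposition you describe is exactly the machinery the paper \emph{does} deploy later, in Propositions \ref{key1}--\ref{key4}, to compute the Assouad and lower spectra; so your framework is fully consistent with the paper's methodology. Your box-dimension count $N(F,n^{-k})\asymp |\pi\mathcal{D}|^{\ell(k)}\cdot(|\mathcal{D}|/|\pi\mathcal{D}|)^k = |\mathcal{D}|^k\,|\pi\mathcal{D}|^{\ell(k)-k}$ is the right one, and your choice of McMullen's Bernoulli measure with weights $p_{(i,j)}=C_i^{\log m/\log n-1}/\sum_{i'}C_{i'}^{\log m/\log n}$ is exactly what is needed for the Hausdorff lower bound.

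One small imprecision: in your Assouad paragraph you describe the worst-case count as the generic $(R/r)^{\dim_{\mathrm B}F}$ count \emph{plus} ``an additional factor of $C_{\max}^{K-k}$''. Read literally that overshoots. The correct picture (and what the paper's proof of Proposition \ref{key2} makes explicit) is that inside an approximate square the covering number factors as a horizontal part $|\pi\mathcal{D}|^{\,l_1(r)-l_1(R)}$ times a vertical part $\prod_l C_{i_l}$, and replacing the product by $C_{\max}^{\,l_2(r)-l_2(R)}$ yields the Assouad formula directly --- not as a correction to the box count. This is a phrasing issue rather than a mathematical gap.
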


Observe that the term $\log \lvert\pi\mathcal{D} \rvert /\log m$ appears in three of the dimension formulae.  This is the dimension of the self-similar projection of $F$ onto the first coordinate. Where this term appears, the second term relates to the dimension of fibres: for the Assouad dimension, $\log C_{\max}/ \log n$ is the maximal fibre dimension (also given by a self-similar set), for the lower dimension, $\log C_{\min}/ \log n$ is the minimal fibre dimension and, for the box dimension, the second term can be interpreted as an `average fibre dimension', observing that $\lvert\mathcal{D} \rvert/ \lvert \pi\mathcal{D} \rvert$ is the arithmetic average of the $C_i$.  The box,  Hausdorff, Assouad and lower dimensions coincide if and only if $C_i$ is the same for all $i \in \pi \mathcal{D}$, commonly referred to as the \emph{uniform fibres case}, and otherwise they are all distinct, see \cite{Mackay, Fraser}.

Before we discuss our main results we provide a minor addition to Theorem \ref{carpetdimensions}.   The lower dimension is well-known to have many strange properties which may not be seen as desirable for a `dimension' to satisfy.  For example, it is not monotone  and it may take the value 0 for sets with non-empty interior, see \cite[Example 2.5]{Fraser}.  One can modify the definition to get rid of these (perhaps) strange properties by defining the \emph{modified lower dimension} by
\[
\dim_\text{ML} F \ = \  \sup \left\{ \dim_\text{L} E \  : \  \emptyset \neq E \subseteq F \right\}.
\]
We mention this here simply to point out that this can be computed for self-affine carpets as a direct consequence of results of Ferguson-Jordan-Shmerkin \cite{fjs} and the formula for the lower dimension given by Fraser \cite{Fraser}.  Perhaps surprisingly the modified lower dimension is equal to the \emph{Hausdorff} dimension and not the lower dimension.

\begin{cor} \label{modifiedlower}
The modified lower dimension of a Bedford-McMullen carpet $F$ is given by
\[
\dim_\mathrm{ML} F \ = \ \dim_\mathrm{H} F \ = \ \frac{\log \sum_{i \in \mathcal{I}} C_i^{\log m/\log n}}{\log m}.
\]
Moreover, it is also true that if $F$ is a self-affine carpet of the type considered by Bara\'nski \cite{baranski} or Lalley-Gatzouras \cite{lalley-gatz}, then
\[
\dim_\mathrm{ML} F \ = \ \dim_\mathrm{H} F.
\]
We refer the readers to \cite{baranski, lalley-gatz} for the precise formulae.
\end{cor}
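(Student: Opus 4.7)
For the upper bound $\dim_{\mathrm{ML}} F \leq \dim_{\mathrm{H}} F$ I would use the standard fact that $\dim_{\mathrm{L}} E \leq \dim_{\mathrm{H}} E$ for any non-empty compact $E$: the defining lower bound on covering numbers for $\dim_{\mathrm{L}}$ yields, via a Frostman-type mass-distribution argument, a positive $s$-dimensional Hausdorff measure on $E$ for every $s < \dim_{\mathrm{L}} E$. Combined with monotonicity of Hausdorff dimension this gives $\dim_{\mathrm{L}} E \leq \dim_{\mathrm{H}} F$ for every non-empty $E \subseteq F$, and taking the supremum over $E$ yields the claimed upper bound.

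For the reverse inequality the plan is to exhibit, for each $\epsilon > 0$, a sub-carpet $E \subseteq F$ that is itself a Bedford--McMullen carpet at some iterated level, enjoys the uniform fibres property, and satisfies $\dim_{\mathrm{H}} E \geq \dim_{\mathrm{H}} F - \epsilon$. To do this I would pass to the $k$-th iterate of the defining IFS, producing an IFS with linear part $\mathrm{diag}(m^{-k},n^{-k})$ indexed by $\mathcal{D}^k$, and invoke Ferguson, Jordan and Shmerkin \cite{fjs} to extract a sub-IFS $\mathcal{D}_k \subseteq \mathcal{D}^k$ in which each occupied column contains the same number of chosen sub-rectangles and whose attractor $E_k$ satisfies $\dim_{\mathrm{B}} E_k \to \dim_{\mathrm{H}} F$ as $k \to \infty$. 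Since $E_k$ is a Bedford--McMullen carpet of uniform fibres type, Theorem~\ref{carpetdimensions} gives $\dim_{\mathrm{L}} E_k = \dim_{\mathrm{B}} E_k$, and hence
\[
\dim_{\mathrm{ML}} F \;\geq\; \lim_{k \to \infty} \dim_{\mathrm{L}} E_k \;=\; \dim_{\mathrm{H}} F.
\]

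For the Bara\'nski and Lalley--Gatzouras extensions the approach is parallel: one needs the analogue of the uniform-fibres approximation in those wider classes, which can be obtained from a variational argument in the spirit of \cite{fjs} together with the Hausdorff dimension formulae of \cite{baranski, lalley-gatz}, combined with the coincidence of the lower and Hausdorff dimensions under uniform fibres in these settings (established in \cite{Fraser}). The main obstacle is pinning down the precise form of the approximation statement and verifying that the approximating sub-IFS genuinely lies in the relevant class, satisfies a uniform-fibres condition so that Fraser's lower dimension formula applies, and targets $\dim_{\mathrm{H}} F$ rather than a smaller invariant such as the box dimension.
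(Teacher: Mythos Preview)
Your proposal is correct and follows essentially the same route as the paper: the upper bound is the standard fact that $\dim_{\mathrm{ML}}$ of a compact set is bounded above by the Hausdorff dimension, and the lower bound comes from the Ferguson--Jordan--Shmerkin approximation by uniform-fibres subsystems together with the coincidence of lower and Hausdorff dimensions in the uniform-fibres case from \cite{Fraser}. The only cosmetic differences are that the paper cites \cite[Lemma 4.3]{fjs} and \cite[Theorem 2.13]{Fraser} explicitly rather than Theorem~\ref{carpetdimensions}, and it phrases the approximation in terms of Hausdorff dimension rather than box dimension (equivalent here since the subsystems have uniform fibres).
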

\begin{proof}
Since the modified lower dimension of a compact set is always bounded above by the Hausdorff dimension we need only prove the lower bound.  It suffices to prove that for any $\varepsilon>0$ we can find a subset of $F$ which has lower dimension within $\varepsilon$ of the Hausdorff dimension.  Via an elegant application of Stirling's formula, Ferguson-Jordan-Shmerkin  \cite[Lemma 4.3]{fjs} showed that one can always find a subset of such a carpet generated by a subsystem of an iterate of the original IFS which both has Hausdorff dimension within $\varepsilon$ of the original  Hausdorff dimension and has the key additional property that the fibres are uniform.  Then it follows from \cite[Theorem 2.13]{Fraser} that the lower dimension and the Hausdorff dimension of the subset coincide, thus proving the result.

The result for more general carpets follows in exactly the same way and we omit the details.
\end{proof}

Our main result on self-affine carpets is the following theorem, which gives explicit expressions for the Assouad and lower spectra.

\begin{thm} \label{CarpetsSpectra}
Let $F$ be a Bedford-McMullen carpet.  Then for all $\theta \in (0,1)$ we have
\[
\dim_{\mathrm{A}}^\theta F \ = \   \frac{   \dim_\mathrm{B} F \ -  \ \theta \, \bigg( \frac{\log (\lvert \mathcal{D} \rvert/C_{\max})}{\log m} \, + \,  \frac{\log C_{\max}}{\log n}\bigg)  }{1- \theta} \quad \wedge \ \dim_\mathrm{A} F
\]
and
\[
\dim_\mathrm{L}^\theta  F \ = \   \frac{   \dim_\mathrm{B} F \ -  \ \theta \,  \bigg( \frac{\log (\lvert \mathcal{D} \rvert/C_{\min})}{\log m} \, + \,  \frac{\log C_{\min}}{\log n}\bigg)  }{1- \theta} \quad \vee  \  \dim_\mathrm{L} F.
\]
Moreover, both both spectra have only one phase transition which occurs at $\theta =\log m/\log n$ and both are analytic and monotonic in the interval $(0,\log m/\log n)$ and constant in $(\log m / \log n, 1)$.
\end{thm}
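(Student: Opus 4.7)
The plan is to reduce both spectra to explicit cell-counting problems via the natural multi-scale decomposition of the carpet. For integers $l_1 \geq l_2 \geq 0$, I introduce a \emph{level-$(l_1, l_2)$ cell}: a rectangle of size $m^{-l_1} \times n^{-l_2}$ labelled by $\bigl((i_1, \ldots, i_{l_1}), (j_1, \ldots, j_{l_2})\bigr)$ with $(i_s, j_s) \in \mathcal{D}$ for $s \leq l_2$ and $i_s \in \pi\mathcal{D}$ for $l_2 < s \leq l_1$; when $l_2 = l_1 \log m /\log n$ these are approximate squares and a direct enumeration recovers $\dim_{\mathrm{B}} F$. Writing $R = m^{-k}$, $r = R^{1/\theta} = m^{-l_1}$ and setting $k' = k \log m / \log n$, $l_2 = l_1 \log m / \log n$, one has $k \leq l_2$ if and only if $\theta \leq \log m / \log n$, and this inequality drives the unique phase transition.

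For $\theta \leq \log m /\log n$ (so $k' \leq k \leq l_2 \leq l_1$) and $x = \Pi(\mathbf{d}) \in F$ with $\mathbf{d} = ((i_s, j_s))_s$, the level-$(l_1, l_2)$ cells intersecting $B(x, R)$ are, up to $O(1)$ boundary corrections, exactly those sharing the first $k$ horizontal digits and first $k'$ vertical digits with $\mathbf{d}$. A direct enumeration of extensions yields
\[
N\bigl(B(x, R) \cap F,\, r\bigr) \;\asymp\; \Bigl(\prod_{s=k'+1}^{k} C_{i_s}\Bigr) \cdot |\mathcal{D}|^{l_2 - k} \cdot |\pi\mathcal{D}|^{l_1 - l_2},
\]
with the three factors counting (i) unpaired $j_s$'s where $i_s$ is frozen by $x$, (ii) fully free $\mathcal{D}$-pairs, and (iii) pure $i$-tails. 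Maximising over $x$ replaces the product by $C_{\max}^{k-k'}$ and minimising by $C_{\min}^{k-k'}$; equating with $(R/r)^\alpha = m^{\alpha(l_1 - k)}$ and using the identities $k - k' = k(\log n - \log m)/\log n$, $l_2 - k = k(\log m - \theta \log n)/(\theta \log n)$ and $l_1 - l_2 = k(\log n - \log m)/(\theta \log n)$ rearranges, after routine algebra, into the Case~1 formulas for both spectra stated in the theorem.

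For $\theta > \log m / \log n$ (so $l_2 < k < l_1$) the same enumeration produces $|\pi\mathcal{D}|^{l_1 - k} \prod_{s=k'+1}^{l_2} C_{i_s}$ cells, and the analogous optimisation collapses the exponent to the constant $\dim_{\mathrm{A}} F$ (respectively $\dim_{\mathrm{L}} F$); this is exactly the $\wedge \dim_{\mathrm{A}} F$ (respectively $\vee \dim_{\mathrm{L}} F$) appearing in the statement. Analyticity on $(0, \log m /\log n)$ is immediate because the Case~1 formulas are rational in $\theta$, and monotonicity follows from a derivative computation whose sign reduces to $\log(|\pi\mathcal{D}| C_{\max}/|\mathcal{D}|) \geq 0$ (respectively $\log(|\pi\mathcal{D}| C_{\min}/|\mathcal{D}|) \leq 0$), both of which hold by the elementary arithmetic-mean bounds for the $C_i$.

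The main obstacle will be the careful symbolic bookkeeping: boundary cells must contribute only universal multiplicative constants, non-integrality of $k, k', l_1, l_2$ must be absorbed into the implicit constants in the $\asymp$ above, and the optimising centre $x$ must genuinely lie in $F$. The last point requires a symbolic sequence whose horizontal digits repeat an index $i^\star$ realising $C_{\max}$ (respectively $C_{\min}$) throughout the critical window $[k'+1, k]$; this is achievable because each $i \in \pi\mathcal{D}$ appears in some $(i, j^\star) \in \mathcal{D}$, so iterating $S_{(i^\star, j^\star)}$ produces a non-empty cylinder realising the extremal column structure.
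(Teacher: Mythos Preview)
Your proposal is correct and follows essentially the same route as the paper: the paper works with approximate squares $Q(\mathbf{d},R)$ rather than balls, but otherwise obtains the identical three-factor counting formula $\bigl(\prod C_{i_l}\bigr)\,|\mathcal{D}|^{\cdots}\,|\pi\mathcal{D}|^{\cdots}$ with the same exponents (your $k',k,l_2,l_1$ are exactly the paper's $l_2(R),l_1(R),l_2(R^{1/\theta}),l_1(R^{1/\theta})$), splits into the same two cases at $\theta=\log m/\log n$, and realises the extremum via a sequence $\mathbf{d}$ with $C_{i_l}=C_{\max}$ (resp.\ $C_{\min}$) for all $l$. Your explicit remarks on analyticity and monotonicity go slightly beyond what the paper spells out, but the underlying argument is the same.
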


An immediate consequence of Theorem \ref{CarpetsSpectra} is that for any Bedford-McMullen carpet with non-uniform fibres the Assouad spectrum is \emph{strictly smaller} than the general upper bound given by \cite[Proposition 3.1]{Spectra16} for all $\theta \in (0,\log m/\log n)$.  Also, the unique phase transition always occurs to the right of the phase transition in the general bound, i.e. 
\[
\frac{\log m}{\log n} > 1- \frac{\dim_\mathrm{B} F}{\dim_\mathrm{A} F}.
\]
These observations follow by simple manipulation of the various dimension formulae.

\begin{figure}[H]
	\centering
	\includegraphics[width=145mm]{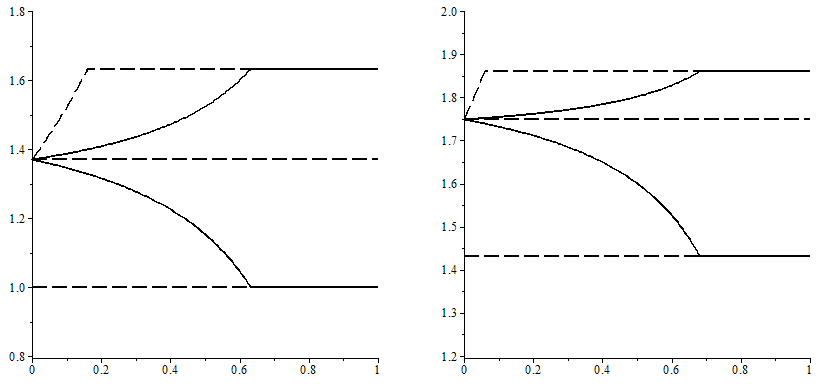}
\caption{Plots of the Assouad and lower spectra for the two carpets depicted in Figure \ref{carpets}.  On the left: $m=2$, $n=3$, $\lvert \mathcal{D} \rvert  = 3$,  $\lvert \pi \mathcal{D}  \rvert = 2$, $C_{\max}=2$ and  $C_{\min} = 1$. On the right: $m=3$, $n=5$, $\lvert \mathcal{D} \rvert = 10$,  $\lvert \pi \mathcal{D}  \rvert = 3$, $C_{\max}=4$ and  $C_{\min} = 2$.  The general bounds from \cite[Proposition 3.1]{Spectra16} and \cite[Proposition 3.10]{Spectra16} are shown with dashed lines. \label{carpetsspectrum}}
\end{figure}

A, perhaps surprising, corollary of Theorem \ref{CarpetsSpectra}  is that the lower spectra is not uniformly bounded above by the Hausdorff or modified lower dimension.  This might be surprising because the lower spectrum is based on the lower dimension, which \emph{is} bounded above by the Hausdorff and  modified lower dimension for compact sets.  The Assouad spectrum does not violate any of the general inequalities between the Assouad dimension and the other common dimensions since it is uniformly bounded below by the upper box dimension, see \cite[Proposition 3.1]{Spectra16}.  In fact it follows that for any Bedford-McMullen carpet with non-uniform fibres there will be an open interval $(0, \theta_0)$ within which the lower spectrum is strictly larger than the Hausdorff and  modified lower dimension.  This follows since for such carpets the lower spectrum is continuous and converges to the box dimension as $\theta \to 0$ which is strictly larger than the Hausdorff and  modified lower dimension.  This goes someway to justifying the sharpness of \cite[Proposition 3.10]{Spectra16}.

Theorem \ref{CarpetsSpectra} proves that the ratio $\frac{\log m}{\log n}$ is a bi-Lipschitz invariant within the class of Bedford-McMullen carpets.  To the best of our knowledge this has not been observed before. Since sets are bi-Lipschitz equivalent to themselves this invariant can be useful in determining what kinds of affine IFSs give rise to a particular carpet.  For example, let $F$ be a Bedford-McMullen carpet generated by an IFS using an $m \times n$ grid for some $2 \leq m<n$.  Assume that $F$ does not have uniform fibres and that it can also be viewed as a Bedford-McMullen carpet generated by an IFS using an $m' \times n'$ grid for some $2 \leq m'<n'$.  Then we may conclude that
\[
\frac{\log m}{\log n} = \frac{\log m'}{\log n'}.
\]
The question of when two Bedford-McMullen carpets are bi-Lipschitz equivalent has been considered by Li-Li-Miao \cite{Miao}, where they give positive results assuming some natural conditions, including that $m,n$ are constant.  In some sense, our results compliment these results as we provide a mechanism for proving negative results.  We demonstrate this by the following example.

\begin{prop}\label{T2}
We can find two topologically equivalent Bedford-McMullen carpets with the same  box counting,  lower and Assouad dimensions, but different spectra.  Therefore $E$ and $F$ are not bi-Lipschitz invariant despite this not being revealed by knowledge of the other dimensions.
\end{prop}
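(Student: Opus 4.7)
The plan is to construct two Bedford--McMullen carpets $E,F$ sharing the same Assouad, box, and lower dimensions but with $\log m_E/\log n_E \neq \log m_F/\log n_F$. By Theorem \ref{CarpetsSpectra} the Assouad spectrum has its unique phase transition at $\theta = \log m/\log n$, so distinct ratios force distinct spectra; the bi-Lipschitz invariance of the spectrum then rules out bi-Lipschitz equivalence. Both carpets will be arranged as Cantor sets, so topological equivalence is automatic (any two Cantor sets are homeomorphic).

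The algebraic observation driving the construction is that the three dimension formulae in Theorem \ref{carpetdimensions} all have the form $\log|\pi\mathcal{D}|/\log m + (\text{quantity})/\log n$, and are therefore preserved by the substitution
\[
n \mapsto n^k, \qquad C_{\max} \mapsto C_{\max}^k, \qquad C_{\min} \mapsto C_{\min}^k, \qquad |\mathcal{D}|/|\pi\mathcal{D}| \mapsto (|\mathcal{D}|/|\pi\mathcal{D}|)^k,
\]
provided $m$ and $|\pi\mathcal{D}|$ are held fixed. Starting from any non-uniform-fibre carpet and forming its ``$k$th-power partner'' for some integer $k \geq 2$ produces a candidate pair.

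Concretely, I would take $m = 5$, $|\pi\mathcal{D}| = 3$ with chosen columns $\{0,2,4\}$, and $k=2$. Let $E$ have $n_1 = 6$ with column counts $(C_0,C_2,C_4)=(3,2,1)$, placing the chosen rectangles at pairwise non-adjacent heights within each column --- say at heights $\{0,2,4\}$, $\{0,3\}$, $\{0\}$. Let $F$ have $n_2 = 36$ with column counts $(9,2,1)$, again at pairwise non-adjacent heights. A direct check using Theorem \ref{carpetdimensions} then gives
\[
\dim_\mathrm{A} E = \dim_\mathrm{A} F = \frac{\log 3}{\log 5} + \frac{\log 3}{\log 6}, \quad \dim_\mathrm{B} E = \dim_\mathrm{B} F = \frac{\log 3}{\log 5} + \frac{\log 2}{\log 6}, \quad \dim_\mathrm{L} E = \dim_\mathrm{L} F = \frac{\log 3}{\log 5},
\]
while Theorem \ref{CarpetsSpectra} places the Assouad-spectrum phase transitions at $\log 5/\log 6$ and $\log 5/\log 36$ respectively.

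The only real work is the topological claim, which I will obtain via the strong separation condition. Since chosen columns are separated by an empty column, chosen rectangles in distinct columns differ by at least two grid steps in the first coordinate and so are never king-adjacent; within each column the heights are non-adjacent by construction. Both IFSs therefore satisfy strong separation, both attractors are compact, perfect, totally disconnected subsets of $[0,1]^2$, hence Cantor sets, hence homeomorphic. The main obstacle in setting up the example is simply arithmetic bookkeeping: convexity forces $|\pi\mathcal{D}| \geq 3$, because with only two chosen columns one would need $C_{\max}^k + C_{\min}^k = 2\bigl((C_{\max}+C_{\min})/2\bigr)^k$, which fails for every $k \geq 2$ as soon as the fibres are non-uniform, so a ``middle'' column is needed to absorb the mismatch.
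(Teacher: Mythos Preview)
Your construction is correct and in fact uses exactly the same parameters as the paper: the paper's two carpets have $(m,n,|\pi\mathcal D|,|\mathcal D|,C_{\max},C_{\min})$ equal to $(5,6,3,6,3,1)$ and $(5,36,3,12,9,1)$, which forces the middle column counts to be $2$ in both cases, matching your $(3,2,1)$ and $(9,2,1)$. Your ``$k$th-power partner'' heuristic and the convexity remark about needing at least three columns are pleasant additions not present in the paper, and your topological argument via the strong separation condition is more explicit than the paper's one-line appeal to totally disconnectedness, but the route is essentially identical.
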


\begin{proof}

We construct the first Bedford-McMullen carpet  such that $m=5$, $n=6$, $\lvert \pi \mathcal D \rvert = 3$, $\lvert \mathcal{D} \rvert = 6$, $C_{\max} = 3$, $C_{\min} = 1$ and the second carpet such that $m=5$, $n=36$, $\lvert \pi \mathcal D \rvert= 3$, $\lvert \mathcal{D} \rvert  = 12$, $C_{\max} = 9$, $C_{\min} = 1$.  Moreover, we may choose the rectangles satisfying these parameters so that they do not touch each other, rendering both carpets totally disconnected and thus homeomorphic.  It follows from Theorem \ref{carpetdimensions} that for both carpets
\[
\dim_\text{A} F \ =  \  \frac{\log 3 }{ \log 5} \, + \,  \frac{ \log 3}{ \log 6},
\]
\[
\dim_\text{B} F \ =  \  \frac{\log 3 }{ \log 5} \, + \,  \frac{ \log 2}{ \log 6}
\]
and
\[
\dim_\text{L} F \ =  \  \frac{\log 3 }{ \log 5} ,
\]
but, since the ratio $\log m/\log n$ is not the same for both constructions, they necessarily have different spectra.
\end{proof}

For the above examples, we guaranteed the spectra would be different by ensuring that $\frac{\log m}{\log n}\neq \frac{\log m'}{\log n'}$.  This turns out to be necessary in finding such examples.  The following corollary expresses this precisely and follows immediately by rearranging the formulae given in Theorems \ref{carpetdimensions} and \ref{CarpetsSpectra}.

\begin{cor}
The dimension spectra of Bedford-McMullen carpets are completely determined by the ratio $\log m/ \log n$ and the  box, lower and Assouad dimensions of the carpet.  More precisely, we have
\[
\dim_{\mathrm{A}}^\theta F \ = \   \frac{   \dim_\mathrm{B} F \ -  \ \theta \, \left(\dim_\mathrm{A} F  - \left( \dim_\mathrm{A} F - \dim_\mathrm{B} F\right) \frac{\log n}{\log m} \right) }{1- \theta} \ \wedge \ \dim_\mathrm{A} F
\]
and
\[
\dim_\mathrm{L}^\theta  F \ = \  \frac{   \dim_\mathrm{B} F \ -  \ \theta \, \left(\dim_\mathrm{L} F  +\left( \dim_\mathrm{B} F - \dim_\mathrm{L} F\right) \frac{\log n}{\log m} \right) }{1- \theta} \ \vee \ \dim_\mathrm{L} F.
\]

\end{cor}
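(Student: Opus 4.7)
The plan is essentially algebraic: the corollary asks us to rewrite the expressions
\[
\frac{\log (\lvert \mathcal{D} \rvert/C_{\max})}{\log m} \, + \,  \frac{\log C_{\max}}{\log n} \qquad \text{and} \qquad \frac{\log (\lvert \mathcal{D} \rvert/C_{\min})}{\log m} \, + \,  \frac{\log C_{\min}}{\log n}
\]
that appear in Theorem \ref{CarpetsSpectra} in terms of $\dim_\mathrm{A} F$, $\dim_\mathrm{B} F$, $\dim_\mathrm{L} F$ and the ratio $\log n/\log m$ only, so that all explicit dependence on the combinatorial data $\lvert \pi\mathcal{D}\rvert$, $\lvert\mathcal{D}\rvert$, $C_{\max}$, $C_{\min}$ is removed. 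Once this is done, substituting into Theorem \ref{CarpetsSpectra} yields the claimed expressions directly.

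To carry this out, I would abbreviate $P = \log\lvert\pi\mathcal{D}\rvert/\log m$ and write the three formulae of Theorem \ref{carpetdimensions} as
\[
\dim_\mathrm{A} F = P + \frac{\log C_{\max}}{\log n}, \quad \dim_\mathrm{B} F = P + \frac{\log(\lvert \mathcal{D} \rvert/\lvert\pi\mathcal{D}\rvert)}{\log n}, \quad \dim_\mathrm{L} F = P + \frac{\log C_{\min}}{\log n}.
\]
Splitting $\log(\lvert\mathcal{D}\rvert/C_{\max})/\log m = \log\lvert\mathcal{D}\rvert/\log m - \log C_{\max}/\log m$ and using $\log\lvert\mathcal{D}\rvert/\log m = P + \log(\lvert\mathcal{D}\rvert/\lvert\pi\mathcal{D}\rvert)/\log m$, we obtain
\[
\frac{\log (\lvert \mathcal{D} \rvert/C_{\max})}{\log m} + \frac{\log C_{\max}}{\log n} = P + \frac{\log C_{\max}}{\log n} + \bigg(\frac{\log(\lvert\mathcal{D}\rvert/\lvert\pi\mathcal{D}\rvert)}{\log n} - \frac{\log C_{\max}}{\log n}\bigg)\frac{\log n}{\log m},
\]
which simplifies via the identifications above to
\[
\dim_\mathrm{A} F + (\dim_\mathrm{B} F - \dim_\mathrm{A} F)\frac{\log n}{\log m} \;=\; \dim_\mathrm{A} F - (\dim_\mathrm{A} F - \dim_\mathrm{B} F)\frac{\log n}{\log m}.
\]
Substituting this into the first formula of Theorem \ref{CarpetsSpectra} yields the claimed expression for $\dim_\mathrm{A}^\theta F$.

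The lower spectrum is handled identically: repeating the same decomposition with $C_{\min}$ in place of $C_{\max}$ converts the bracketed term to $\dim_\mathrm{L} F + (\dim_\mathrm{B} F - \dim_\mathrm{L} F)\log n/\log m$, which gives the stated formula for $\dim_\mathrm{L}^\theta F$. There is no real obstacle here; the only thing to be slightly careful about is bookkeeping the logarithms to avoid sign errors when converting between base-$m$ and base-$n$ exponents, and to confirm that the ``$\wedge\,\dim_\mathrm{A} F$'' and ``$\vee\,\dim_\mathrm{L} F$'' clauses transfer verbatim from Theorem \ref{CarpetsSpectra}, which they do because they do not depend on the combinatorial data at all.
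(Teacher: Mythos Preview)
Your proposal is correct and follows exactly the approach the paper indicates: the corollary is stated to follow ``immediately by rearranging the formulae given in Theorems \ref{carpetdimensions} and \ref{CarpetsSpectra}'', and you have carried out precisely that rearrangement. The algebra is right, and your decomposition via $P=\log\lvert\pi\mathcal{D}\rvert/\log m$ is the natural way to do it.
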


\subsection{Proof of Theorem \ref{CarpetsSpectra}}

Let $\mathbf{d} \in \mathcal{D}^\infty$ and $r>0$ be small.  Define $l_1(r), l_2(r)$ to be the unique natural numbers satisfying
\[
m^{-l_1(r)} \,  \leq \, r \, < \,  m^{-l_1(r)+1}
\]
and
\[
n^{-l_2(r)} \,  \leq \, r \, < \,  n^{-l_2(r)+1}.
\]
The approximate square `centred'  at $\mathbf{d} = ((i_1, j_1), (i_2, j_2), \dots)  \in \mathcal{D}^\infty$ with `radius' $r>0$ is defined by
\begin{eqnarray*}
Q(\mathbf{d}, r) &=& \Big\{ \mathbf{d}' = ((i'_1, j'_1), (i'_2, j'_2), \dots)  \in \mathcal{D}^\infty :   i'_l = i_l \ \forall l \leq l_1(r) \text{ and }  j'_l = j_l \ \forall  l \leq l_2(r)  \Big\},
\end{eqnarray*}
and the geometric projection of this set, $\Pi\big(Q(\mathbf{d}, r)\big)$, is a subset of $F$ which contains $\Pi(\mathbf{d})$ and naturally sits inside a rectangle which is `approximately a square' in that it has a base with length $m^{-l_1(r)} \in (r/m, r]$ and height $n^{-l_2(r)} \in (r/n, r]$.  One obtains equivalent definitions of the Assouad and lower spectra  if one replaces $B(x,R)$  by $\Pi\big(Q(\mathbf{d}, R)\big)$, i.e. we may use approximate squares instead of balls.

Our subsequent analysis breaks naturally into two cases, illustrated by the following lemma.

\begin{lma}
Let $R\in (0,1)$ and $\theta \in (0,1)$.  Then
\begin{enumerate}
\item if $\theta \leq \log m / \log n$, then
\[
 l_2(R) \ \leq \  l_1(R) \ \leq \  l_2\left(R^{1/\theta} \right) \ \leq \  l_1\left(R^{1/\theta} \right) 
\]
\item if $\theta \geq \log m / \log n$, then
\[
 l_2(R) \ \leq \ l_2\left(R^{1/\theta} \right) \ \leq \   l_1(R) \ \leq \  l_1\left(R^{1/\theta} \right) .
\]
\end{enumerate}
\end{lma}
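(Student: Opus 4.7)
The plan is to unpack the definitions of $l_1(r)$ and $l_2(r)$ into a single closed form and then reduce all six required inequalities to monotonicity of the ceiling function. From $m^{-l_1(r)} \le r < m^{-l_1(r)+1}$ one obtains $\log(1/r)/\log m \le l_1(r) < \log(1/r)/\log m + 1$, which forces
\[
l_1(r) \; = \; \bigl\lceil \log(1/r)/\log m \bigr\rceil,
\]
and similarly $l_2(r) = \lceil \log(1/r)/\log n \rceil$. Armed with this, the first thing I would record are the four inequalities that hold without any condition on $\theta$: since $m < n$ we have $\log(1/r)/\log m \ge \log(1/r)/\log n$ for every small $r$, so monotonicity of the ceiling gives $l_2(r) \le l_1(r)$ applied at both $r = R$ and $r = R^{1/\theta}$; and since $\theta \in (0,1)$ we have $R^{1/\theta} \le R$, so $\log(1/R^{1/\theta}) \ge \log(1/R)$, giving $l_i(R) \le l_i(R^{1/\theta})$ for $i=1,2$.

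These four inequalities already pin down everything in the two chains except for the single middle comparison between $l_1(R)$ and $l_2(R^{1/\theta})$. To handle this, note that
\[
\log(1/R)/\log m \;\; \text{vs.} \;\; \log(R^{-1/\theta})/\log n \; = \; \log(1/R)/(\theta \log n)
\]
are ordered (for positive $\log(1/R)$) exactly according to whether $\theta \log n \le \log m$ or $\theta \log n \ge \log m$, i.e.\ exactly according to the dichotomy $\theta \lessgtr \log m/\log n$ in the statement. Applying the monotone ceiling function transports this ordering of reals directly to the integers $l_1(R)$ and $l_2(R^{1/\theta})$: in case (1) we get $l_1(R) \le l_2(R^{1/\theta})$, and in case (2) we get $l_2(R^{1/\theta}) \le l_1(R)$. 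Combined with the four unconditional inequalities, this yields the stated chains.

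There is no genuine obstacle here; the only minor point to flag is the boundary case $\theta = \log m/\log n$, where the two real numbers above coincide, so both $l_1(R) \le l_2(R^{1/\theta})$ and $l_2(R^{1/\theta}) \le l_1(R)$ hold trivially (with equality), which is consistent with both branches of the lemma applying. The lemma is essentially a bookkeeping statement that organises the four relevant scales $m^{-l_1(R)}, n^{-l_2(R)}, m^{-l_1(R^{1/\theta})}, n^{-l_2(R^{1/\theta})}$ into the correct order depending on whether the rate $1/\theta$ at which the fine scale shrinks is slower or faster than the ratio $\log n/\log m$ governing how $l_1$ and $l_2$ compare.
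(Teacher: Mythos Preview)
Your argument is correct and is essentially the approach the paper takes; indeed the paper's proof is the single sentence ``This follows immediately from the definitions of $l_1(\cdot)$ and $l_2(\cdot)$,'' and you have simply supplied the details of that immediate deduction via the closed form $l_1(r) = \lceil \log(1/r)/\log m \rceil$, $l_2(r) = \lceil \log(1/r)/\log n \rceil$.
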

\begin{proof}
This follows immediately from the definitions of $l_1(\cdot)$ and $l_2(\cdot)$.
\end{proof}

\begin{prop} \label{key1} Let $\theta  \in (0, \log m / \log n]$.  Then
\[
\dim_{\mathrm{A}}^\theta F  \ = \   \frac{   \bigg( \frac{\log \lvert\pi\mathcal{D} \rvert }{ \log m} \, + \,  \frac{ \log(\lvert\mathcal{D} \rvert/ \lvert \pi\mathcal{D} \rvert )}{ \log n}  \bigg) \ -  \ \theta \bigg( \frac{\log (\lvert \mathcal{D} \rvert/C_{\max})}{\log m} \, + \,  \frac{\log C_{\max}}{\log n}\bigg)  }{1- \theta}
\]
\end{prop}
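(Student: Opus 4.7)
The plan is to pass from balls to approximate squares (equivalent for the spectrum by the preceding discussion) and then to count, both from above and from below, how many level-$R^{1/\theta}$ approximate squares meet a given level-$R$ approximate square inside $F$.

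First I would introduce the shorthand $L_1 = l_1(R)$, $L_2 = l_2(R)$, $M_1 = l_1(R^{1/\theta})$, $M_2 = l_2(R^{1/\theta})$, so that by the hypothesis $\theta \leq \log m/\log n$ and the preceding lemma we have $L_2 \leq L_1 \leq M_2 \leq M_1$. Recall that an approximate square at scale $r$ fixes the first $l_1(r)$ horizontal symbols and the first $l_2(r)$ vertical symbols of a code. I would then decompose the number of level-$R^{1/\theta}$ refinements of $Q(\mathbf{d}, R)$ into three disjoint ranges of symbolic levels. On the range $L_2 < k \leq L_1$ the outer square has $i_k$ fixed but $j_k$ free, while the inner square fixes both, contributing $\prod_{k=L_2+1}^{L_1} C_{i_k}$ choices. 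On $L_1 < k \leq M_2$ both coordinates are free outside and fixed inside, contributing $|\mathcal{D}|^{M_2 - L_1}$. On $M_2 < k \leq M_1$ only $i_k$ is fixed in the inner square, giving $|\pi \mathcal{D}|^{M_1 - M_2}$. Altogether
\[
N\bigl(\Pi(Q(\mathbf{d}, R)),\, R^{1/\theta}\bigr) \ \asymp \ \prod_{k=L_2+1}^{L_1} C_{i_k} \cdot |\mathcal{D}|^{M_2 - L_1} \cdot |\pi \mathcal{D}|^{M_1 - M_2},
\]
with implicit constants absorbing standard boundary effects uniformly in $R$.

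The upper bound for the Assouad spectrum then follows at once from $C_{i_k} \leq C_{\max}$. For the matching lower bound I would exhibit a specific $\mathbf{d}$ with $i_k$ equal to some fixed $i^*$ satisfying $C_{i^*} = C_{\max}$ throughout the range $L_2 < k \leq L_1$, which is realisable because such an $i^*$ lies in $\pi\mathcal{D}$ and the horizontal coding is unconstrained within $\pi \mathcal{D}$. Taking logarithms, using $L_1 \sim (-\log R)/\log m$, $L_2 \sim (-\log R)/\log n$, $M_1 \sim (-\log R)/(\theta \log m)$, $M_2 \sim (-\log R)/(\theta \log n)$, and dividing by $\log(R/R^{1/\theta}) = (1/\theta - 1)(-\log R)$ yields a linear combination of $\log C_{\max}$, $\log |\mathcal{D}|$, $\log |\pi \mathcal{D}|$ with weights $1/\log m$ and $1/\log n$. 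Multiplying through by $\theta$ to clear the factor $1/\theta - 1$ and identifying the combination $\log|\pi\mathcal{D}|/\log m + \log(|\mathcal{D}|/|\pi\mathcal{D}|)/\log n = \dim_{\mathrm{B}}F$ collapses the expression to the claimed closed form.

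The main obstacle I anticipate is the rigorous justification of the product decomposition of $N$: one must verify that the enumerated symbolic refinements correspond bijectively, up to universal multiplicity, to the genuine geometric covering count at scale $R^{1/\theta}$, accounting for approximate squares with slightly varying aspect ratios and occasional boundary protrusions. These are standard issues in Bedford--McMullen style counting, and since they are absorbed into the multiplicative constants hidden in $\asymp$ they disappear on taking logs; nevertheless some care is required to present them cleanly.
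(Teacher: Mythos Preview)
Your proposal is correct and follows essentially the same approach as the paper: the same three-stage symbolic decomposition yielding $\prod C_{i_k} \cdot |\mathcal{D}|^{M_2-L_1} \cdot |\pi\mathcal{D}|^{M_1-M_2}$, the upper bound via $C_{i_k}\le C_{\max}$, and the lower bound via a code $\mathbf{d}$ lying in a maximal column. The anticipated obstacle you flag (that the symbolic count matches the geometric covering number up to multiplicative constants) is handled in the paper exactly as you suggest, by absorbing boundary effects into the $\asymp$ and noting they vanish after taking logs.
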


\begin{proof}
Let
\[
\mathbf{d} = ((i_1, j_1), (i_2, j_2), \dots)  \in \mathcal{D}^\infty
\]
and $R \in (0,1)$.   When one looks at the approximate square $\Pi\big(Q(\mathbf{d}, R)$ one finds that it is made up of several horizontal strips with base length the same as that of the approximate square, i.e. $m^{-l_1(R)}$, and height $n^{-l_1(R)}$ which is considerably smaller, but still larger than $R^{1/\theta}/n$ since $ l_1(R) \ \leq \  l_2\left(R^{1/\theta} \right) $.  These strips are images of $F$ under $l_1(R)$-fold compositions of maps from $\{S_d\}_{d \in \mathcal{D}}$  and so to keep track of how many there are, one counts rectangles in the appropriate columns for each map in the composition.  The total number of such strips is seen to be:
\[
\prod_{l=l_2(R)+1}^{l_1(R)} C_{i_l}.
\]
We want to cover these strips by sets of diameter $R^{1/\theta}$ and so we iterate the construction inside each horizontal strip until the height is $n^{-l_2(R^{1/\theta})}$, which is approximately $R^{1/\theta}$. This takes $l_2(R^{1/\theta}) - l_1(R)$ iterations and, this time, for every iteration we pick up $\lvert \mathcal{D} \rvert$ smaller rectangles, rather than just those inside a particular column.  At this stage we are left with a large collection of rectangles with height approximately $R^{1/\theta}$ and base $m^{-l_2(R^{1/\theta})}$, which is somewhat larger.  We now iterate inside each of these rectangles until we obtain a family of rectangles each with base of length $m^{-l_1(R^{1/\theta})}$.  This takes a further $l_1(R^{1/\theta}) -l_2(R^{1/\theta})$ iterations.  We can then cover the resulting collection by small sets of diameter $R^{1/\theta}$, observing that sets formed by this last stage of iteration can be covered simultaneously, provided they are in the same column.  Therefore,  for each of the last iterations we only require a factor of $\lvert\pi\mathcal{D} \rvert$ more covering sets. Putting all these estimates together, yields
\begin{eqnarray*}
N \big( \Pi\big(Q(\mathbf{d}, R) \big) , R^{1/\theta}\big)&\asymp&  \Bigg(\prod_{l=l_2(R)+1}^{l_1(R)} C_{i_l} \Bigg) \ \Bigg( \lvert \mathcal{D} \rvert^{l_2(R^{1/\theta}) - l_1(R)} \Bigg) \ \Bigg(  \lvert\pi\mathcal{D} \rvert^{l_1(R^{1/\theta}) - l_2(R^{1/\theta})} \Bigg) \\ \\
&\leq&  \Bigg( C_{\max}\Bigg)^{l_1(R) - l_2(R)} \ \Bigg( \lvert \mathcal{D} \rvert^{l_2(R^{1/\theta}) - l_1(R)} \Bigg) \ \Bigg(  \lvert\pi\mathcal{D} \rvert^{l_1(R^{1/\theta}) - l_2(R^{1/\theta})} \Bigg) \\ \\
&\asymp&  \left(C_{\max}\right)^{\log R /\log n - \log R / \log m} \\ \\
&\,& \qquad \qquad \cdot \  \lvert \mathcal{D} \rvert^{\log R /\log m - \log R / \theta \log n}  \  \lvert\pi\mathcal{D} \rvert^{\log R / \theta\log n - \log R / \theta \log m} \\ \\
&=&  R^{\log C_{\max} / \log n + \log (\lvert \mathcal{D} \rvert/C_{\max} ) /\log m + \log(\lvert\pi\mathcal{D} \rvert/ \lvert \mathcal{D} \rvert )/ \theta \log n \, - \, \log \lvert\pi\mathcal{D} \rvert / \theta \log m}.
\end{eqnarray*}
Taking logs and dividing by $(1-1/\theta)\log R$ yields
\begin{eqnarray*}
\frac{\log N \big( \Pi\big(Q(\mathbf{d}, R) \big), R^{1/\theta}\big)}{(1-1/\theta)\log R} &\leq &  \frac{ \frac{\log C_{\max}}{\log n}\, + \, \frac{\log (\lvert \mathcal{D} \rvert/C_{\max} )}{\log m}  \, + \,  \frac{ \log(\lvert \pi\mathcal{D} \rvert/ \lvert\mathcal{D} \rvert)}{ \theta \log n}    \,  -  \,   \frac{\log \lvert\pi\mathcal{D} \rvert }{ \theta \log m}   }{1-1/\theta} \  + \ \frac{O(1)}{\log R} \\ \\
 &= &  \frac{   \bigg( \frac{\log \lvert\pi\mathcal{D} \rvert }{ \log m} \, + \,  \frac{ \log(\lvert\mathcal{D} \rvert/ \lvert \pi\mathcal{D} \rvert )}{ \log n}  \bigg) \  - \  \theta \bigg( \frac{\log (\lvert \mathcal{D} \rvert/C_{\max} )}{\log m} \, + \,  \frac{\log C_{\max}}{\log n}\bigg)  }{1-\theta} \ + \ \frac{O(1)}{\log R}.
\end{eqnarray*}
Letting $R \to 0$ yields the required upper bound, noting that these estimates are uniform in $\mathbf{d}  \in \mathcal{D}^\infty$.  The required lower bound also follows by noting that if we choose $\mathbf{d}   \in \mathcal{D}^\infty$, such that $C_{i_l} = C_{\max}$ for all $l$, then the only appearance of an inequality in the above argument is replaced by equality and observing that all of our covering estimates were optimal up to multiplicative constants.
\end{proof}

\begin{prop} \label{key2} Let   $\theta  \in [ \log m / \log n, 1)$.  Then
\[
\dim_{\mathrm{A}}^\theta F  \ = \  \frac{\log \lvert\pi\mathcal{D} \rvert }{ \log m} \ +\ \frac{\log C_{\max}}{\log n}.
\]
\end{prop}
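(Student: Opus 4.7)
The plan is to mirror the argument of Proposition \ref{key1}, working again with approximate squares at scale $R^{1/\theta}$ in place of metric balls, but reorganising the symbolic counting to respect the Case 2 ordering $l_2(R) \leq l_2(R^{1/\theta}) \leq l_1(R) \leq l_1(R^{1/\theta})$ supplied by the preceding lemma. An approximate square at scale $R^{1/\theta}$ is prescribed by specifying $i'_l$ for $l \leq l_1(R^{1/\theta})$ and $j'_l$ for $l \leq l_2(R^{1/\theta})$, and it sits inside $Q(\mathbf{d}, R)$ precisely when $i'_l = i_l$ for $l \leq l_1(R)$ and $j'_l = j_l$ for $l \leq l_2(R)$.

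The main observation is that the remaining symbolic freedom now splits into two disjoint blocks of positions. For $l_2(R) < l \leq l_2(R^{1/\theta})$, the value $i'_l$ is already pinned by $\mathbf{d}$ and $j'_l$ has $C_{i_l}$ admissible companions in $\mathcal{D}$; for $l_1(R) < l \leq l_1(R^{1/\theta})$, the approximate square no longer specifies $j'_l$ while $i'_l$ is free of $\mathbf{d}$ and so admits $|\pi\mathcal{D}|$ values; and, crucially, the middle range $l_2(R^{1/\theta}) < l \leq l_1(R)$ contributes a single choice per position, since $\mathbf{d}$ still dictates $x$ while the approximate square no longer specifies $y$. This is the key structural difference from Case 1, where the analogous middle stage produces the $|\mathcal{D}|^{l_2(R^{1/\theta}) - l_1(R)}$ factor appearing in Proposition \ref{key1}. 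Multiplying the contributions gives
\[
N\bigl(\Pi(Q(\mathbf{d}, R)),\, R^{1/\theta}\bigr) \ \asymp \ \Bigg(\prod_{l=l_2(R)+1}^{l_2(R^{1/\theta})} C_{i_l}\Bigg) \cdot |\pi\mathcal{D}|^{l_1(R^{1/\theta}) - l_1(R)},
\]
where the comparison between counting approximate squares and covering by balls of diameter $R^{1/\theta}$ is the same standard one used in Proposition \ref{key1}.

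To extract the spectrum, substitute the asymptotics $l_2(R^{1/\theta}) - l_2(R) \sim -\log R\,(1-\theta)/(\theta \log n)$, $l_1(R^{1/\theta}) - l_1(R) \sim -\log R\,(1-\theta)/(\theta \log m)$ and $\log(R/R^{1/\theta}) = -\log R\,(1-\theta)/\theta$, then divide. The uniform bound $\prod C_{i_l} \leq C_{\max}^{l_2(R^{1/\theta}) - l_2(R)}$ yields the upper estimate $\dim_{\mathrm{A}}^\theta F \leq \log|\pi\mathcal{D}|/\log m + \log C_{\max}/\log n$, and the matching lower bound follows by choosing $\mathbf{d}$ with $i_l = i^*$ equal to a maximal column index for every $l$, which turns the inequality into equality; letting $R \to 0$ then completes the proof. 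The only step needing real care is verifying that the middle range genuinely contributes nothing -- this is precisely what renders the resulting exponent $\theta$-independent and equal to $\dim_{\mathrm{A}} F$ throughout $[\log m / \log n, 1)$; the remaining manipulations are formal and parallel those of Proposition \ref{key1}.
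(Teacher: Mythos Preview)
Your proposal is correct and follows essentially the same approach as the paper: you obtain the identical counting formula
\[
N\bigl(\Pi(Q(\mathbf{d},R)),R^{1/\theta}\bigr)\ \asymp\ \Bigg(\prod_{l=l_2(R)+1}^{l_2(R^{1/\theta})}C_{i_l}\Bigg)\,|\pi\mathcal{D}|^{l_1(R^{1/\theta})-l_1(R)},
\]
bound the product by $C_{\max}^{l_2(R^{1/\theta})-l_2(R)}$, and obtain the lower bound by choosing $\mathbf{d}$ in a maximal column, exactly as the paper does. The only cosmetic difference is that you frame the counting symbolically (explaining why the middle range $l_2(R^{1/\theta})<l\leq l_1(R)$ contributes a single choice per position) whereas the paper describes the same phenomenon geometrically in terms of horizontal strips; the content is the same.
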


\begin{proof}
This proof is similar to the proof of Proposition \ref{key1}.  Let 
\[
\mathbf{d} = ((i_1, j_1), (i_2, j_2), \dots)  \in \mathcal{D}^\infty
\]
and $R \in (0,1)$.  We proceed as before, but this time one obtains a family of horizontal strips with height approximately $R^{1/\theta}$ after $l_2(R^{1/\theta})$ steps, which is before the bases become smaller than $R$, since in this case $l_2(R^{1/\theta}) \leq l_1(R)$.  The effect is that the `middle term' above (concerning powers of $\lvert \mathcal{D} \rvert$) is not required.  The horizontal strips are then covered as before yielding
\begin{eqnarray*}
N \big( \Pi\big(Q(\mathbf{d}, R) \big) , R^{1/\theta}\big)&\asymp&  \Bigg(\prod_{l=l_2(R)+1}^{l_2(R^{1/\theta})} C_{i_l} \Bigg) \ \Bigg(  \lvert\pi\mathcal{D} \rvert^{l_1(R^{1/\theta}) - l_1(R)} \Bigg) \\ \\
&\leq&  \Bigg( C_{\max}\Bigg)^{l_2(R^{1/\theta}) - l_2(R)}  \  \lvert\pi\mathcal{D} \rvert^{l_1(R^{1/\theta}) - l_1(R)} \\ \\
&\asymp& \left( C_{\max}\right)^{\log R /\log n - \log R / \theta \log n}   \  \lvert\pi\mathcal{D} \rvert^{\log R /\log m - \log R / \theta \log m} \\ \\
&=&  R^{(1-1/\theta) \left(\log C_{\max} / \log n + \log\lvert \pi D \rvert /\log m\right)}.
\end{eqnarray*}
Taking logs and dividing by $(1-1/\theta) \log R$ yields
\begin{eqnarray*}
\frac{\log N \big( \Pi\big(Q(\mathbf{d}, R) \big) , R^{1/\theta}\big)}{(1-1/\theta)  \log R} &\leq& \frac{\log \lvert\pi\mathcal{D} \rvert }{ \log m} \ +\ \frac{\log C_{\max}}{\log n} \ + \ \frac{O(1)}{\log R}.
\end{eqnarray*}
Letting $R \to 0$ yields the required upper bound, noting that these estimates are uniform in $\mathbf{d}  \in \mathcal{D}^\infty$.  The required lower bound again follows by noting that if we choose $\mathbf{d}  \in \mathcal{D}^\infty$, such that $C_{i_l} = C_{\max}$ for all $l$, then the only appearance of an inequality in the above argument is replaced by equality and observing that all of our covering estimates were optimal up to multiplicative constants.
\end{proof}

We now turn to the lower spectrum, which can be handled similarly and so only the key points in the proofs are given.

\begin{prop} \label{key3} Let   $\theta  \in (0, \log m / \log n]$.  Then
\[
\dim_\mathrm{L}^\theta  F  \ = \   \frac{   \bigg( \frac{\log \lvert\pi\mathcal{D} \rvert }{ \log m} \, + \,  \frac{ \log(\lvert\mathcal{D} \rvert/ \lvert \pi\mathcal{D} \rvert )}{ \log n}  \bigg) \ -  \ \theta \bigg( \frac{\log (\lvert \mathcal{D} \rvert/C_{\min})}{\log m} \, + \,  \frac{\log C_{\min}}{\log n}\bigg)  }{1- \theta}
\]
\end{prop}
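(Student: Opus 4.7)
The proof will mirror that of Proposition \ref{key1} almost verbatim, with $C_{\min}$ playing the role of $C_{\max}$ and inequalities reversed. The plan is to use the same approximate-square covering estimate, extract a \emph{lower} bound on $N(\Pi(Q(\mathbf{d},R)), R^{1/\theta})$ that is uniform in $\mathbf{d} \in \mathcal{D}^\infty$, and then match it with a specific choice of $\mathbf{d}$ realising the lower bound.

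First I would fix $\mathbf{d} = ((i_1,j_1),(i_2,j_2),\dots) \in \mathcal{D}^\infty$ and $R \in (0,1)$, and recycle the geometric analysis from Proposition \ref{key1}: peeling off horizontal strips for $l_2(R) < l \leq l_1(R)$, then iterating the full IFS for $l_1(R) < l \leq l_2(R^{1/\theta})$, and finally iterating inside each almost-square piece using only column information for $l_2(R^{1/\theta}) < l \leq l_1(R^{1/\theta})$. This yields the same two-sided estimate
\[
N\big(\Pi(Q(\mathbf{d},R)), R^{1/\theta}\big) \ \asymp \ \Bigg(\prod_{l=l_2(R)+1}^{l_1(R)} C_{i_l}\Bigg) \ \lvert \mathcal{D}\rvert^{\,l_2(R^{1/\theta})-l_1(R)} \ \lvert\pi\mathcal{D}\rvert^{\,l_1(R^{1/\theta})-l_2(R^{1/\theta})}
\]
as appeared in the proof of Proposition \ref{key1}. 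This is the content that is identical between the two proofs, and I would simply cite the earlier argument rather than rewriting it.

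Next comes the key change: since every $i_l$ satisfies $i_l \in \pi\mathcal{D}$, we have $C_{i_l} \geq C_{\min}$, and therefore
\[
\prod_{l=l_2(R)+1}^{l_1(R)} C_{i_l} \ \geq \ C_{\min}^{\,l_1(R)-l_2(R)}.
\]
Substituting this into the display above, taking logarithms, and dividing by $(1-1/\theta)\log R$ (which is positive as $\log R < 0$ and $\theta < 1$), the same routine computation as in Proposition \ref{key1} now produces a uniform lower bound
\[
\frac{\log N\big(\Pi(Q(\mathbf{d},R)), R^{1/\theta}\big)}{(1-1/\theta)\log R} \ \geq \ \frac{\Big(\tfrac{\log \lvert\pi\mathcal{D}\rvert}{\log m} + \tfrac{\log(\lvert\mathcal{D}\rvert/\lvert\pi\mathcal{D}\rvert)}{\log n}\Big) - \theta \Big(\tfrac{\log(\lvert\mathcal{D}\rvert/C_{\min})}{\log m} + \tfrac{\log C_{\min}}{\log n}\Big)}{1-\theta} \ + \ \frac{O(1)}{\log R},
\]
valid uniformly in $\mathbf{d}$. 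Letting $R \to 0$ gives the lower bound for $\dim_\mathrm{L}^\theta F$.

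For the matching upper bound, I would choose some $d_* = (i_*,j_*) \in \mathcal{D}$ with $C_{i_*} = C_{\min}$ (which exists by definition of $C_{\min}$) and take $\mathbf{d}$ to be the constant word $(d_*,d_*,\dots)$. Then $C_{i_l} = C_{\min}$ for every $l$, so the inequality $C_{i_l} \geq C_{\min}$ above is an equality, and every other estimate in the covering argument was already tight up to multiplicative constants. This produces a point $\Pi(\mathbf{d}) \in F$ at which the covering number matches the claimed bound, yielding the required upper bound on $\dim_\mathrm{L}^\theta F$. The only conceptual point to check — and the closest thing to an obstacle — is that the restriction in the definition of $C_{\min}$ to $i \in \pi\mathcal{D}$ (rather than all of $\mathcal{I}$) is exactly what lets both steps go through: it guarantees $C_{i_l} \geq C_{\min}$ for every $\mathbf{d} \in \mathcal{D}^\infty$ while still allowing us to realise equality with a legitimate symbolic sequence.
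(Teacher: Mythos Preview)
Your proposal is correct and follows essentially the same argument as the paper: both start from the $\asymp$ covering estimate established in Proposition \ref{key1}, replace the product $\prod C_{i_l}$ by $C_{\min}^{\,l_1(R)-l_2(R)}$ to obtain a uniform lower bound, and then realise equality by choosing $\mathbf{d}$ with $C_{i_l}=C_{\min}$ for all $l$. Your remark about why $C_{\min}$ is taken over $i\in\pi\mathcal{D}$ is a nice clarification that the paper leaves implicit.
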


\begin{proof}
Let $\mathbf{d} = ((i_1, j_1), (i_2, j_2), \dots)  \in \mathcal{D}^\infty$ and $R \in (0,1)$.   Proceeding as above, one obtains
\begin{eqnarray*}
N \big( \Pi\big(Q(\mathbf{d}, R) \big) , R^{1/\theta}\big) &\asymp&  \Bigg(\prod_{l=l_2(R)+1}^{l_1(R)} C_{i_l} \Bigg) \ \Bigg( \lvert \mathcal{D} \rvert^{l_2(R^{1/\theta}) - l_1(R)} \Bigg) \ \Bigg(  \lvert\pi\mathcal{D} \rvert^{l_1(R^{1/\theta}) - l_2(R^{1/\theta})} \Bigg)
\end{eqnarray*}
but this time we continue by considering uniform  lower bounds:
\begin{eqnarray*}
 N \big( \Pi\big(Q(\mathbf{d}, R) \big) , R^{1/\theta}\big) &\geq&  \Bigg( C_{\min}\Bigg)^{l_1(R) - l_2(R)} \ \Bigg( \lvert \mathcal{D} \rvert^{l_2(R^{1/\theta}) - l_1(R)} \Bigg) \ \Bigg(  \lvert\pi\mathcal{D} \rvert^{l_1(R^{1/\theta}) - l_2(R^{1/\theta})} \Bigg) \\ \\
&\asymp&  \left(C_{\min}\right)^{\log R /\log n - \log R / \log m} \\ \\
&\,& \qquad \qquad \cdot \  \lvert \mathcal{D} \rvert^{\log R /\log m - \log R / \theta \log n}  \  \lvert\pi\mathcal{D} \rvert^{\log R / \theta\log n - \log R / \theta \log m} \\ \\
&=&  R^{\log C_{\min} / \log n + \log (\lvert \mathcal{D} \rvert/C_{\min} ) /\log m + \log(\lvert\pi\mathcal{D} \rvert/ \lvert \mathcal{D} \rvert )/ \theta \log n \, - \, \log \lvert\pi\mathcal{D} \rvert / \theta \log m}.
\end{eqnarray*}
Taking logs and dividing by $(1-1/\theta)\log R$ yields
\begin{eqnarray*}
\frac{\log N \big( \Pi\big(Q(\mathbf{d}, R) \big) , R^{1/\theta}\big)}{(1-1/\theta)\log R}  &\geq &  \frac{ \frac{\log C_{\min}}{\log n}\, + \, \frac{\log (\lvert \mathcal{D} \rvert/C_{\min} )}{\log m}  \, + \,  \frac{ \log(\lvert \pi\mathcal{D} \rvert/ \lvert\mathcal{D} \rvert)}{ \theta \log n}    \,  -  \,   \frac{\log \lvert\pi\mathcal{D} \rvert }{ \theta \log m}   }{1-1/\theta} \  + \ \frac{O(1)}{\log R} \\ \\
 &= &  \frac{   \bigg( \frac{\log \lvert\pi\mathcal{D} \rvert }{ \log m} \, + \,  \frac{ \log(\lvert\mathcal{D} \rvert/ \lvert \pi\mathcal{D} \rvert )}{ \log n}  \bigg) \  - \  \theta \bigg( \frac{\log (\lvert \mathcal{D} \rvert/C_{\min} )}{\log m} \, + \,  \frac{\log C_{\min}}{\log n}\bigg)  }{1-\theta}  \ + \ \frac{O(1)}{\log R}.
\end{eqnarray*}
Letting $R \to 0$ yields the required lower bound.  The required upper bound also follows by noting that if we choose $\mathbf{d}   \in \mathcal{D}^\infty$, such that $C_{i_l} = C_{\min}$ for all $l$, then the only appearance of an inequality in the above argument is replaced by equality and observing that all of our covering estimates were optimal up to multiplicative constants.
\end{proof}

\begin{prop} \label{key4} Let   $\theta  \in [ \log m / \log n, 1)$.  Then
\[
\dim_\mathrm{L}^\theta F   \ = \  \frac{\log \lvert\pi\mathcal{D} \rvert }{ \log m} \ +\ \frac{\log C_{\min}}{\log n}.
\]
\end{prop}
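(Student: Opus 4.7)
The plan is to carry out the direct analogue of Proposition \ref{key2}, in the same way that Proposition \ref{key3} is the analogue of Proposition \ref{key1}: keep the geometric decomposition of the approximate square, but swap upper bounds for lower bounds and use $C_{\min}$ in place of $C_{\max}$. Concretely, fix an arbitrary $\mathbf{d} = ((i_1,j_1),(i_2,j_2),\dots) \in \mathcal{D}^\infty$ and a small $R \in (0,1)$, pass from balls to approximate squares $\Pi(Q(\mathbf{d},R))$, and note that for $\theta \geq \log m/\log n$ we have $l_2(R^{1/\theta}) \leq l_1(R)$, so the same column-by-column iteration as in Proposition \ref{key2} still yields the two-sided estimate
\[
N\big(\Pi(Q(\mathbf{d}, R)),\, R^{1/\theta}\big) \ \asymp \ \Bigg(\prod_{l=l_2(R)+1}^{l_2(R^{1/\theta})} C_{i_l}\Bigg) \ \lvert \pi\mathcal{D}\rvert^{l_1(R^{1/\theta}) - l_1(R)}.
\]

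For the lower bound on $\dim_\mathrm{L}^\theta F$, I would estimate the product from below using the uniform inequality $C_{i_l} \geq C_{\min}$, valid because $i_l \in \pi\mathcal{D}$ for every $l$. This gives
\[
N\big(\Pi(Q(\mathbf{d}, R)), R^{1/\theta}\big) \ \gtrsim \ C_{\min}^{\,l_2(R^{1/\theta}) - l_2(R)} \ \lvert\pi\mathcal{D}\rvert^{l_1(R^{1/\theta}) - l_1(R)}
\]
uniformly in $\mathbf{d}$. Substituting $l_2(R^{1/\theta}) - l_2(R) = (\log R)(1 - 1/\theta)/\log n + O(1)$ and the analogous identity with $\log m$, then taking logs, dividing by $(1-1/\theta)\log R$, and letting $R \to 0$, the expression collapses to $\log\lvert\pi\mathcal{D}\rvert/\log m + \log C_{\min}/\log n$, exactly mirroring the final computation in Proposition \ref{key2}.

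For the matching upper bound I would choose $\mathbf{d}$ with $C_{i_l} = C_{\min}$ for every $l$; this is admissible because by definition $C_{\min}$ is attained at some column $i^\ast \in \pi\mathcal{D}$, so a word that stays forever in column $i^\ast$ is a legitimate element of $\mathcal{D}^\infty$. For this choice the $\gtrsim$ becomes $\asymp$ and the previous calculation becomes tight, forcing $\dim_\mathrm{L}^\theta F$ to be at most the claimed value. The only non-routine ingredient anywhere in the argument is the two-sided comparison for $N$ displayed above — in particular, a matching geometric lower bound showing that no cover can be substantially more efficient than the construction — but this was already established in the proof of Proposition \ref{key2} and is simply reused here, so no new obstacle arises.
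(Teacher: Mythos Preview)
Your proposal is correct and follows essentially the same argument as the paper: the paper likewise reuses the two-sided covering estimate from Proposition~\ref{key2}, bounds the product $\prod C_{i_l}$ below by $C_{\min}^{l_2(R^{1/\theta})-l_2(R)}$ to obtain the uniform lower bound, and then chooses $\mathbf{d}$ with $C_{i_l}=C_{\min}$ for all $l$ to show this lower bound is sharp.
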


\begin{proof}
 Let $\mathbf{d} = ((i_1, j_1), (i_2, j_2), \dots)  \in \mathcal{D}^\infty$ and $R \in (0,1)$.  Following the strategy of the proof of Lemma \ref{key2} but this time considering uniform lower bounds we get:
\begin{eqnarray*}
N \big( \Pi\big(Q(\mathbf{d}, R) \big) , R^{1/\theta}\big) &\asymp&  \Bigg(\prod_{l=l_2(R)+1}^{l_2(R^{1/\theta})} C_{i_l} \Bigg) \ \Bigg(  \lvert\pi\mathcal{D} \rvert^{l_1(R^{1/\theta}) - l_1(R)} \Bigg) \\ \\
&\geq&  \Bigg( C_{\min}\Bigg)^{l_2(R^{1/\theta}) - l_2(R)}  \  \lvert\pi\mathcal{D} \rvert^{l_1(R^{1/\theta}) - l_1(R)} \\ \\
&\asymp& \left( C_{\min}\right)^{\log R /\log n - \log R / \theta \log n}   \  \lvert\pi\mathcal{D} \rvert^{\log R /\log m - \log R / \theta \log m} \\ \\
&=&  R^{(1-1/\theta) \left(\log C_{\min} / \log n + \log\lvert \pi D \rvert /\log m\right)}
\end{eqnarray*}
which yields
\begin{eqnarray*}
\frac{\log N \big( \Pi\big(Q(\mathbf{d}, R) \big) , R^{1/\theta}\big)}{(1-1/\theta)  \log R} &\geq& \frac{\log \lvert\pi\mathcal{D} \rvert }{ \log m} \ +\ \frac{\log C_{\min}}{\log n} \ + \ \frac{O(1)}{\log R}.
\end{eqnarray*}
Letting $R \to 0$ yields the required lower  bound.  The required upper bound again follows by noting that if we choose $\mathbf{d}  \in \mathcal{D}^\infty$, such that $C_{i_l} = C_{\min}$ for all $l$, then the only appearance of an inequality in the above argument is replaced by equality and observing that all of our covering estimates were optimal.
\end{proof}

Theorem \ref{CarpetsSpectra} follows immediately by combining Lemmas \ref{key1}, \ref{key2}, \ref{key3}, \ref{key4} and the known formulae for the box, Assouad and lower dimensions.

\subsection{Non-constant matrices and higher dimensions} \label{lalley-gatzS}

Bedford-McMullen carpets are the simplest, and probably most studied, family of self-affine sets.  However the Assouad and lower dimensions are known for considerably more general families, in particular, self-affine classes where the matrices do not need to be constant, see for example the Lalley-Gatzouras class \cite{lalley-gatz} or Bara\'nski class \cite{baranski}.  For more details on the Assouad dimensions of these carpets, we refer the reader to \cite{Mackay, Fraser}.  Also see \cite{FraserJordan} for results on the Assouad dimension of self-affine carpets with no underlying grid structure.  It would be interesting to also compute the Assouad and lower spectra for these more general families, but this seems to be considerably more complicated. We will attempt to explain this extra complication in this section via a simple heuristic, and also try to indicate what different phenomena we expect to find.

We will consider Lalley-Gatzouras carpets, which are similar to the Bedford-McMullen carpets in that they are generated by affine maps based on diagonal matrices and the matrices all contract more strongly in the vertical direction than in the horizontal direction.  Moreover, the projection onto the first coordinate is a self-similar set satisfying the open set condition and again this set plays a key role in the dimension theory of the self-affine set. The difference is that the family of matrices need not be constant and so the columns can have varying widths and within each column the rectangles can have varying heights and be distributed less rigidly.  
\begin{figure}[H] 
	\centering
	\includegraphics[width=120mm]{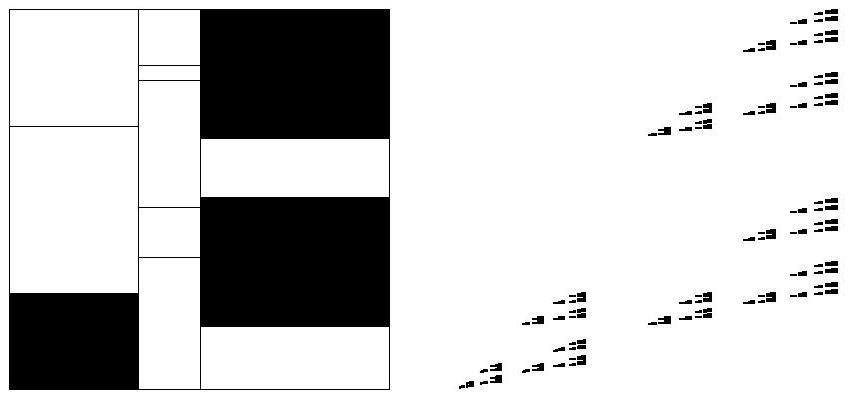}
\caption{An example of a Lalley-Gatzouras carpet.\label{carpetsLGfig}}
\end{figure}
The Assouad dimension of a Lalley-Gatzouras carpet is given by a formula similar to that used in the  Bedford-McMullen case. Indeed, it is equal to the dimension of the projection onto the first coordinate plus the largest dimension corresponding to a particular column.  This is related to `tangent sets' having a `product structure', see  \cite{Mackay}. This is also seen when computing the Assouad spectrum of Bedford-McMullen carpets.  When $\theta \geq \log m/ \log n$, the approximate squares  have a `discretised product structure' viewed at the pair of scales $R$ and $R^{1/\theta}$.  However, if $\theta < \log m/ \log n$, then one has to iterate the construction for a longer time before the height of the basic sets is roughly $R^{1/\theta}$.  This means the approximate squares lose their discrete product structure and the box dimension starts to contribute to the formula for the spectrum.  We would expect a similar phenomena to occur for Lalley-Gatzouras sets, but different columns may have a different \emph{threshold} for witnessing a product structure.  For example, suppose the column with the largest dimension, i.e., the one which contributes to the Assouad dimension consisted only of $1/4 \times 1/8$ rectangles.  This would give rise to a discrete product structure (and so the Assouad spectrum would be equal to the Assouad dimension) for $\theta> \log4/\log 8 = 2/3$, but for smaller values of $\theta$ the spectrum would decay towards the box dimension.  However, if there was another column which corresponded to a smaller dimension but which consisted only of $1/3 \times 1/9$ rectangles, then this column would allow a discrete product structure to appear on small scales for $\theta> \log3/\log 9 = 1/2$.  This would mean that the Assouad spectrum was at least equal to the dimension of the projection plus the dimension corresponding to the second column for $\theta>1/2$.  We expect that this could lead to phase transitions at $\theta =1/2$ and $\theta =2/3$;   recall that the Assouad spectrum in the Bedford-McMullen case always has precisely one phase transition.  Moreover, by carefully arranging many columns which have smaller dimension as the log-log eccentricity increases could lead to the same phenomena many times, perhaps leading to arbitrarily many phase transitions.

Computing the dimension spectra in this setting seems like a delicate and interesting problem.  Even in the simpler subcase (which we allude to here) where the matrices are constant within each column seems challenging and probably a good place to start.  The advantage of this case is that each column has a well-defined log-log eccentricity.

\begin{ques}
What are the dimension  spectra for the more general self-affine carpets considered by Lalley-Gatzouras and Bara\'nski?
\end{ques}

Another possible generalisation of our results on self-affine sets would be to consider the higher dimensional analogue of the Bedford-McMullen carpets; the so-called \emph{self-affine sponges}.  These were first considered by Kenyon and Peres  \cite{kenyon} and recently the Assouad and lower dimensions were computed by Fraser and Howroyd \cite{sponges}.  Here the maps act on $d$-dimensional space (instead of the plane) and the matrices associated to the affine maps are constantly equal to
\[
\left ( \begin{array}{cccc}
1/n_1 & 0 & \dots & 0 \\ 
0 & 1/n_2 & \dots & 0 \\ 
\vdots  &  \vdots  & \ddots & \vdots \\ 
0 & 0 & \dots & 1/n_d \\ 
\end{array} \right ) 
\]
for some fixed integers $2 \leq n_1 < n_2 < \dots < n_d$.  Again we believe that more complicated spectra are possible, with phase transitions possibly occurring at the $d-1$ (possibly distinct) ratios $\log n_k / \log n_{k+1}$ ($k=1, \dots, d-1$).  Recall that in the (non-uniform fibres) planar case precisely one phase transition occurs at the ratio $\log n_1 / \log n_{2} =\log m / \log n $.

\begin{ques}
What are the dimension spectra for self-affine sponges?
\end{ques}

\section{Self-similar and self-conformal sets with overlaps} \label{Self-similarSect}

Self-similar and self-conformal sets are another important class of fractals generated by iterated function systems.  They are simpler than the self-affine sets considered in the previous section in that the defining maps locally scale by the same amount in every direction.  If the constituent pieces making up the set do not overlap too much, then the situation is quite straightforward and it follows from standard results that the lower and Assouad dimensions coincide, see for example \cite[Corollary 2.11]{Fraser}, rendering the dimension spectra not interesting.  However, if there are overlaps in the construction then Fraser proved that the Assouad dimension can be strictly larger than the upper box dimension, but the lower dimension always equals the upper box dimension, see \cite[Section 3.1 and Corollary 2.11]{Fraser}.  Therefore,  the interesting problem is to consider the Assouad spectrum in the case of complicated overlaps.  This is not straightforward and we do not obtain precise results, but we show how to obtain non-trivial bounds in terms of the local dimensions of related Gibbs measures.  This technique was used by Fraser and Jordan to study the Assouad dimension of self-affine carpets where the self-similar set in the projection has overlaps, see \cite{FraserJordan}.

Let $U \subset \mathbb{R}^d$ be a bounded simply connected non-empty open set.  A map $S: U \to \mathbb{R}^d$ is called a \emph{conformal map} if it is differentiable and the Jacobian $S'(x)$ satisfies $|S'(x) y | =|S'(x)| | y | > 0$ for all $x \in U$ and $y \in \mathbb{R}^d \setminus \{0\}$.  Let $\{S_i\}_{i \in \mathcal{I}}$ be a finite collection of conformal maps on a common domain $U \subseteq \mathbb{R}^d$, and assume that each maps $U$ into itself.  For convenience we will extend each of the maps to the boundary of $U$ by continuity.   Furthermore, assume that each map is a bi-Lipschitz contraction, i.e. 
\[
0< \inf_{x \in U} |S'(x) |  \leq \sup_{x \in U} |S'(x) | < 1.
\]
There is a unique non-empty compact set $F$ satisfying 
\[
F \ = \ \bigcup_{i \in \mathcal{I}} S_i(F)
\]
which is called the \emph{self-conformal set} associated with the iterated function system (IFS) $\{S_i\}_{i \in \mathcal{I}}$.  See \cite[Chapter 9]{Falconer} for more details on the theory of attractors of IFSs. If for each map the Jacobian $ |S'(x) |$ is independent of $x$, then the maps $S$ are \emph{similarities} and the set $F$ is called \emph{self-similar}.  Here the situation is slightly simpler, but is subsumed into the following analysis of the self-conformal setting.

Let
\[
\mathcal{I}^\infty = \{ \mathbf{i} = (i_1, i_2, \dots) : i_l  \in \mathcal{I} \}
\]
be the set of infinite words over $\mathcal{I}$ and let $\Pi :  \mathcal{I}^\infty \to F$ be the canonical map from the symbolic space to the geometric space defined by
\[
\Pi(\mathbf{i}) \ =  \ \bigcap_{l \in \mathbb{N}} \, S_{i_1} \circ \cdots \circ  S_{i_l} \big( \overline{U}\big).
\]
Evidently $F = \Pi(\mathcal{I}^\infty)$ and we thus obtain a symbolic coding of the set $F$, but we note that some points may have many codes. Write $\mathcal{I}^* = \cup_{k \in \mathbb{N}} \mathcal{I}^k$ for the set of all finite words over $\mathcal{I}$. For $\mathbf{i} = (i_1, i_2, \dots, i_k) \in \mathcal{I}^*$ write
\[
[\mathbf{i}] \ = \ \{ \mathbf{i} \mathbf{j} : \mathbf{j} \in \mathcal I^\infty \}
\]
for the cylinder consisting of all infinite words beginning with the finite word $\mathbf{i}$ and 
\[
S_\mathbf{i} = S_{i_1}  \circ \cdots  \circ S_{i_k}.
\]
If $d \geq 2$, then it is well-known that conformal maps satisfy the \emph{bounded distortion principle}.  This means that for  $\mathbf{i} \in \mathcal{I}^k$ and $x, y \in U$
\[
|S_\mathbf{i}'(x) | \asymp |S_\mathbf{i}'(y) |
\]
with the implied constants independent of $x,y$ and $k$.  This means that for each $\mathbf{i} \in \mathcal{I}^k$, there is a constant $\text{Lip}(\mathbf{i}) \in (0,1)$ such that 
\[
|S_\mathbf{i}'(x) | \asymp |S_\mathbf{i}(\overline{U}) | \asymp \text{Lip}(\mathbf{i})
\]
where we abuse notation slightly by writing $|S_\mathbf{i}'(x) | $ for the determinant of the Jacobian  $S_\mathbf{i}'(x) $ and $|S_\mathbf{i}(\overline{U}) |$ for the diameter of the set $S_\mathbf{i}(\overline{U})$.  The constant $\text{Lip}(\mathbf{i})$ could be the determinant of the Jacobian derivative of $S_\mathbf{i}$ at the  fixed point of $S_\mathbf{i}$, or the upper (or lower) Lipschitz constant of $S_\mathbf{i}$.   For definiteness let
\[
\text{Lip}(\mathbf{i}) \ = \ \sup_{x,y \in \overline{U}} \frac{|S_\mathbf{i}(x) - S_\mathbf{i}(y) |}{|x-y|}
\]
be the upper Lipschitz constant of $S_\mathbf{i}$.  To get the bounded distortion principle to hold in $\mathbb{R}$ (which we will need), we assume  H\"older continuity of the derivatives $S_i'$.  We emphasise that this extra assumption is not required for $d \geq 2$.

The \emph{topological pressure} of the IFS is defined by
\[
P(s) \ = \ \lim_{k \to \infty} \frac{1}{k} \log \sum_{\mathbf{i} \in \mathcal{I}^k} \text{Lip}(\mathbf{i})^s
\]
where the limit is easily seen to exist by submultiplicativity of the upper Lipschitz constant for example.  Actually we have quasi-multiplicativity in the sense that for  $\mathbf{i}, \mathbf{j} \in \mathcal{I}^*$
\[
\text{Lip}(\mathbf{i}\mathbf{j}) \asymp \text{Lip}(\mathbf{i}) \text{Lip}(\mathbf{j}).
\]
The topological pressure is a continuous decreasing function of $s \geq 0$ and there is a unique zero, i.e. a unique $s\geq 0$ such that $P(s)=0$.  From now on we will fix $s$ to be the unique zero of the topological pressure.  A classical result in dimension theory states that
\[
\dim_\mathrm{L} F  =  \dim_\mathrm{H} F = \overline{\dim}_{\mathrm{B}}F \leq \min\{d,s\}
\]
and if the \emph{open set condition} is satisfied, then one has equality. This dimension formula is often referred to as \emph{Bowen's formula} goes back to Bowen \cite{Bowen} and Ruelle \cite{Ruelle}. We will be focusing on the case where the open set condition fails, but the general upper bound given by $s$ is still significant.  We recall that the Assouad dimension may exceed $s$.  Another classical result, this time from the thermodynamic formalism, states that there exists a unique  Borel probability measure $m$ on $\mathcal{I}^\infty$ (equipped with the product topology) satisfying
\[
m([\mathbf{i}]) \asymp  \text{Lip}(\mathbf{i})^s.
\]
This measure is the \emph{Gibbs measure} corresponding to the potential $\phi:\mathbf{i} \mapsto  s \log  (\text{Lip}(\mathbf{i}))$ and is a fundamental object in the  thermodynamical formalism and related problems from  statistical physics, see \cite{Ruelle, techniques}. 

Let $\mu = m \circ \Pi^{-1}$ be the push forward of $m$ onto $F$.  The measure $\mu$ is a Borel probability measure and is intimately related to the dimension theory of $F$.  The number
\[
t \ := \ \sup \left\{ t' \geq 0 \ : \  (\exists C>0) \,  (\forall x \in F) \, \mu\left( B(x,r) \right) \leq C r^{t'} \right\}
\]
will play a central role in our analysis.  This number is related to the fine structure of $\mu$ and can be expressed in terms of more familiar dimensional quantities, such as the local dimension or $L^q$-spectrum.  We record these connections to put our results in context.  The lower local dimension of a Borel probability measure $\nu$ at a point $x$ in its support is defined by
\[
\lld(\nu,x) \  = \ \liminf_{r \to 0} \frac{\log \nu\big(B(x,r) \big)}{\log r}
\]
with the upper local dimension defined similarly with $\limsup$ in place of $\liminf$.  It is clear that
\[
t \ \leq  \  \inf_{x \in F} \lld (\mu, x) \  \leq  \ \overline{\dim}_{\mathrm{B}} F \  \leq  \ \min \left\{ s, \, d\right\}
\]
The $L^q$-spectrum gives a coarse indication of the global fluctuations of a measure and are a standard tool in multifractal analysis and information theory.    For $q >0$ and $r>0$ let
\begin{eqnarray*}
M_r^q(\mu) &=& \sup \Big\{ \sum_{i} \mu(U_i)^q  \  : \   \{U_i\}_i \text{ is a centered packing of $\text{supp} \, \mu$ by balls of radius $r$ }\Big\}.
\end{eqnarray*}
The upper $L^q$-spectrum of $\mu$ is defined by
\[
\tau_{ \mu}(q) = \limsup_{r \to 0} \frac{\log M_r^q(\mu)}{\log r}.
\]
It is straightforward to see that $\tau_{ \mu}(q)$ is non-decreasing and concave in $q$.   By following the argument in \cite[Lemma 2.1]{FraserJordan}, one may prove that $t$ is actually equal to the slope of the asymptote as $q \to \infty $, i.e.,
\[
t \ = \ \inf  \left\{ t_0 \geq 0 \ : \  (\forall q>0) \,  \tau_{ \mu}(q) <  t_0q  \right\}.
\]
Moreover, if the \emph{multifractal formalism} holds for a sequence of $q \to \infty$, then
\[
t \ = \  \inf  \left\{ t_0 \geq 0 \ : \  (\forall q>0) \,  \tau_{ \mu}(q) <  t_0q  \right\} \ = \ \inf_{x \in F} \lld (\mu, x).
\]
See \cite{FengLau} for more details on the multifractal formalism.  If $F$ is self-similar, then the Gibbs measure $\mu$ is a self-similar measure and in this case we know more.  For example, it was shown by Peres and Solomyak \cite{exists} that the $L^q$-spectrum exists, and Feng \cite{Feng} proved that  the multifractal formalism holds whenever $\tau_\mu(q)$ is differentiable.  This must happen for a  sequence of $q \to \infty$ since $\tau_\mu(q)$  is non-decreasing and concave.  It seems highly likely to us that these results extend to Gibbs measure on self-conformal sets, but we omit further discussion.

We can now state our main result, which gives a non-trivial upper bound for the Assouad spectrum of $F$.

\begin{thm} \label{SSmain}
Let $F$ be a self-conformal set in $\mathbb{R}^d$ satisfying the bounded distortion principle and let $s$ and $t$ be as above.  Then
\[
\dim_{\mathrm{A}}^\theta F \ \leq \ \frac{s - t \theta}{1-\theta}. 
\]
\end{thm}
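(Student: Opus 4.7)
The plan is to estimate $N\big(B(x,R) \cap F, R^{1/\theta}\big)$ uniformly in $x \in F$ via a two-scale symbolic decomposition. Fix $x \in F$ and small $R>0$, and let $\mathcal{I}_R \subseteq \mathcal{I}^*$ be the collection of words $\mathbf{i}$ with $\mathrm{Lip}(\mathbf{i}) \le R < \mathrm{Lip}(\hat{\mathbf{i}})$, where $\hat{\mathbf{i}}$ denotes $\mathbf{i}$ with its final letter removed. The cylinders $[\mathbf{i}]$, $\mathbf{i} \in \mathcal{I}_R$, partition $\mathcal{I}^\infty$, and the Gibbs property together with $P(s)=0$ gives $m([\mathbf{i}]) \asymp \mathrm{Lip}(\mathbf{i})^s \asymp R^s$ throughout the stopping set.

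The heart of the argument is to show that the sub-collection
\[
\mathcal{I}_R(x) \ := \ \big\{\mathbf{i} \in \mathcal{I}_R : S_\mathbf{i}(\overline U) \cap B(x,R) \neq \emptyset\big\}
\]
has cardinality $\lesssim R^{t-s}$. By bounded distortion $|S_\mathbf{i}(\overline U)| \asymp R$, so each $S_\mathbf{i}(\overline U)$ with $\mathbf{i} \in \mathcal{I}_R(x)$ lies inside $B(x,cR)$ for a uniform $c>0$; the corresponding cylinders $[\mathbf{i}]$ are pairwise disjoint subsets of $\mathcal{I}^\infty$ contained in $\Pi^{-1}(B(x,cR))$. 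The defining property of $t$ then yields
\[
|\mathcal{I}_R(x)| \cdot R^s \ \asymp \ \sum_{\mathbf{i} \in \mathcal{I}_R(x)} m([\mathbf{i}]) \ \le \ \mu\big(B(x,cR)\big) \ \lesssim \ R^t,
\]
which is the single place where both the measure $\mu$ and the overlaps are absorbed.

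Next, each piece $S_\mathbf{i}(F)$, $\mathbf{i} \in \mathcal{I}_R(x)$, is covered at the fine scale. Since $S_\mathbf{i}$ is bi-Lipschitz with constants $\asymp R$ and $\bd F \le s$ by Bowen's formula, for any $\epsilon>0$ and all sufficiently small $R$ one has
\[
N\big(S_\mathbf{i}(F), R^{1/\theta}\big) \ \asymp \ N\big(F, R^{1/\theta-1}\big) \ \lesssim \ R^{-(s+\epsilon)(1-\theta)/\theta}.
\]
Multiplying these two bounds and rearranging exponents is a short computation giving
\[
N\big(B(x,R) \cap F, R^{1/\theta}\big) \ \lesssim \ \left(R/R^{1/\theta}\right)^{(s - t\theta)/(1-\theta) + \epsilon}
\]
uniformly in $x \in F$. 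Since $\epsilon>0$ is arbitrary, letting $\epsilon \downarrow 0$ completes the proof.

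I expect the main obstacle to be the estimate $|\mathcal{I}_R(x)| \lesssim R^{t-s}$: this is the only place where $t$ and the measure $\mu$ enter, and it is what prevents the overlaps from inflating the covering count. The trick is to stop at the symbolic level, where cylinders at a fixed stopping level are automatically disjoint, and to transfer the resulting mass bound to the geometric side via the coding map $\Pi$. Bounded distortion is also essential for identifying $\mathrm{Lip}(\mathbf{i})$ with $|S_\mathbf{i}(\overline U)|$ and for making each $S_\mathbf{i}$ genuinely bi-Lipschitz on $\overline U$ in the covering step; verifying these two points carefully is the remaining technical chore.
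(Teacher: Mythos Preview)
Your proposal is correct and follows essentially the same two-scale strategy as the paper: bound the number of scale-$R$ stopping cylinders meeting $B(x,R)$ via the Gibbs/measure argument, then cover each piece at scale $R^{1/\theta}$. The only substantive difference is that for the fine-scale count you invoke $\overline{\dim}_{\mathrm{B}} F \le s$ together with the bi-Lipschitz property of $S_{\mathbf{i}}$, whereas the paper instead uses a second stopping set $\mathcal{I}(R^{1/\theta-1})$ and bounds its cardinality directly from the pressure; both routes yield the same exponent.

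One small technical point: you write $\mu(B(x,cR)) \lesssim R^t$, but $t$ is defined as a \emph{supremum} of admissible exponents, so the uniform bound is only guaranteed for every $t'<t$, not necessarily at $t$ itself. The paper handles this by working with $t^-$ and letting $t^- \nearrow t$ at the end. You already carry an $\epsilon$ through the fine-scale step, so the fix is simply to use $t-\epsilon$ in the measure bound as well and absorb both into the final $\epsilon \downarrow 0$.
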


In certain situations we can say more.  For simplicity we will specialise to $d=1$ and the self-similar setting, although some results also hold in higher dimensions and for self-conformal sets.  The main result of \cite{Fraseretal} was that for a self-similar subset of the line, either the Assouad and box dimensions coincide, or the Assouad dimension is 1.  This was generalised to the self-conformal setting in \cite{sascha}; see also \cite{kaenmakiassouad2}.  The precise condition determining this dichotomy is also known to be the \emph{weak separation condition}: if the weak separation condition is satisfied, then the Assouad, Hausdorff and box dimensions of $F$ coincide, and otherwise the Assouad dimension is 1.  The weak separation condition was introduced by Lau and Ngai \cite{LauNgai} and developed by Zerner \cite{Zerner}.  For the formal definition and basic properties we refer the reader to \cite{Zerner} but, roughly speaking, it means the overlaps are `simple' in the sense that either small cylinders overlap exactly or are somewhat separated.


A famous folklore conjecture on self-similar sets is that if the semigroup generated by the defining maps  is free (i.e., there are no exact overlaps), then the Hausdorff and box dimension of the associated self-similar set should be given by $\min\{1,s\}$.  Significant progress has been made on this conjecture in recent years, in particular through the work of Hochman \cite{Hochman}.  Hochman proved that if the dimension is not given by the expected value, then there must be `super-exponential concentration of cylinders'.  The only known mechanism for such concentration is exact overlaps and if the parameters defining the maps in the IFS are algebraic then  the two phenomena are equivalent. 

Shmerkin \cite{pablo}  recently  extended Hochman's work to include the $L^q$-spectrum, which in turn allows us to obtain a precise result in an important and general setting.  For self-similar sets the measure $\mu$ is self-similar and it therefore follows from \cite[Theorem 6.6]{pablo} that the $L^q$-spectrum of $\mu$ is affine and given by $\tau_{ \mu}(q)  = s(q-1)$, provided the IFS does not have `super-exponential concentration of cylinders'.  In particular, this implies that $t=s$.  For the precise definition of   `super-exponential concentration of cylinders' we refer the reader to \cite{Hochman, pablo}.   Roughly speaking it means that sequences of construction cylinders of level $n$ cluster towards each other super-exponentially fast as $n \to \infty$.  We can glean the following corollary by combining our Theorem \ref{SSmain} with the work of Hochman \cite{Hochman} and Shmerkin \cite{pablo} and the main result of \cite{Fraseretal}.

\begin{cor} \label{SScor}
Let $F\subset \mathbb{R}$ be a self-similar set. If the IFS  does not have `super-exponential concentration of cylinders', then for all $\theta \in (0,1)$
\[
\dim_{\mathrm{A}}^\theta F \  =  \ \overline{\dim}_\mathrm{B} F.
\]
In particular, this is satisfied if the semigroup generated by the defining maps is free and  the parameters defining the maps are algebraic. If the semigroup is not free, but the weak separation property is satisfied, then for all $\theta \in (0,1)$
\[
\dim_{\mathrm{A}}^\theta F \  =  \ \overline{\dim}_\mathrm{B} F \  =  \ \dim_\mathrm{A} F.
\]
\end{cor}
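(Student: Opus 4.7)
The plan is to combine Theorem \ref{SSmain} with results of Hochman \cite{Hochman}, Shmerkin \cite{pablo}, and the main theorem of \cite{Fraseretal}. For the first assertion the key input is Shmerkin's theorem \cite[Theorem 6.6]{pablo}: absence of super-exponential concentration forces the $L^q$-spectrum of the self-similar Gibbs measure $\mu$ to be affine, namely $\tau_{\mu}(q) = s(q-1)$. Via the asymptote characterisation of $t$ recorded in the excerpt, this immediately gives $t=s$, and Theorem \ref{SSmain} then yields
\[
\dim_{\mathrm{A}}^\theta F \ \leq \ \frac{s - s\theta}{1-\theta} \ = \ s.
\]

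For the matching lower bound, I would invoke Hochman's theorem \cite{Hochman}: under the same hypothesis $\dim_{\mathrm{H}} F = \min\{s,1\}$, and so $\overline{\dim}_{\mathrm{B}} F = \min\{s,1\}$ as well (using $F \subseteq \mathbb{R}$ and the standard sandwich between Hausdorff and upper box dimension). When $s \leq 1$, the general bound $\overline{\dim}_{\mathrm{B}} F \leq \dim_{\mathrm{A}}^\theta F$ from item (1) of the preliminaries sandwiches $\dim_{\mathrm{A}}^\theta F$ between $s$ and $s$. When $s > 1$, one uses instead $\dim_{\mathrm{A}}^\theta F \leq 1 = \overline{\dim}_{\mathrm{B}} F$ together with the same lower bound. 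The ``in particular'' clause is then immediate, since Hochman's theorem also shows that a free semigroup with algebraic parameters has no super-exponential concentration.

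For the weak separation clause the argument collapses to a one-line sandwich: by the main theorem of \cite{Fraseretal} (and its self-conformal extensions in \cite{sascha, kaenmakiassouad2}) the weak separation property forces $\dim_{\mathrm{A}} F = \overline{\dim}_{\mathrm{B}} F$, and item (1) of the preliminaries gives
\[
\overline{\dim}_{\mathrm{B}} F \ \leq \ \dim_{\mathrm{A}}^\theta F \ \leq \ \dim_{\mathrm{A}} F \ = \ \overline{\dim}_{\mathrm{B}} F.
\]
The only real obstacle is bookkeeping around the regime $s > 1$: a naive reading of Shmerkin's affine $L^q$-spectrum would suggest $t = s > 1$, whereas $t \leq d = 1$ always, so this regime must be handled separately via the trivial upper bound $\dim_{\mathrm{A}}^\theta F \leq 1$ rather than directly through Theorem \ref{SSmain}.
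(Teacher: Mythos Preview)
Your argument is correct and follows essentially the same route as the paper, which simply states that the corollary is obtained by combining Theorem \ref{SSmain} with Hochman \cite{Hochman}, Shmerkin \cite{pablo}, and the main result of \cite{Fraseretal}. You have filled in the details accurately, and your explicit treatment of the regime $s>1$ (where one bypasses Theorem \ref{SSmain} in favour of the trivial bound $\dim_{\mathrm{A}}^\theta F \leq 1$ combined with Hochman's $\overline{\dim}_{\mathrm{B}}F = 1$) is a genuine clarification that the paper leaves implicit.
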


 Finally we note that if a self-similar measure has a convolution structure, then one may write it as the convolution of:  a self-similar measure with no overlaps, and another measure.  Then, using the fact that taking convolutions does not lower the value of $t$, one can conclude that $t>0$. This means that we get a genuinely better upper bound than that given by \cite[Proposition 3.1]{Spectra16} in the region:
\[
\frac{s-\overline{\dim}_\mathrm{B} F }{t} < \theta < \frac{1-s}{1-t}
\] 
which is non-empty provided
\[
t >  \frac{s-\overline{\dim}_\mathrm{B} F}{1-\overline{\dim}_\mathrm{B} F}.
\]
In particular, if $s=\overline{\dim}_\mathrm{B} F$ then $t>0$ guarantees an improved bound.  For more details of the convolution argument, see \cite[Lemma 2.4]{FraserJordan}.

It would clearly be valuable to find non-trivial \emph{lower} bounds (if they exist) or to compute the spectrum explicitly for specific examples.  Two specific examples come to mind: the example from \cite[Section 3.1]{Fraser} where the overlaps are complicated by the fact that two of the maps share a fixed point; and the example considered by Bandt and Graf \cite[Section 2 (5)]{bandt} where the translations are chosen carefully to ensure that the weak separation condition fails despite all the contraction ratios being the same.

\begin{ques}
What is the Assouad spectrum for general self-similar and self-conformal sets?
\end{ques}

Given Corollary \ref{SScor} and the various  folklore conjectures on self-similar sets and measures with overlaps, we conjecture that in fact for any self-similar and self-conformal set the Assouad spectrum coincides with the upper box dimension.

\begin{figure}[H] 
	\centering
	\includegraphics[width=125mm]{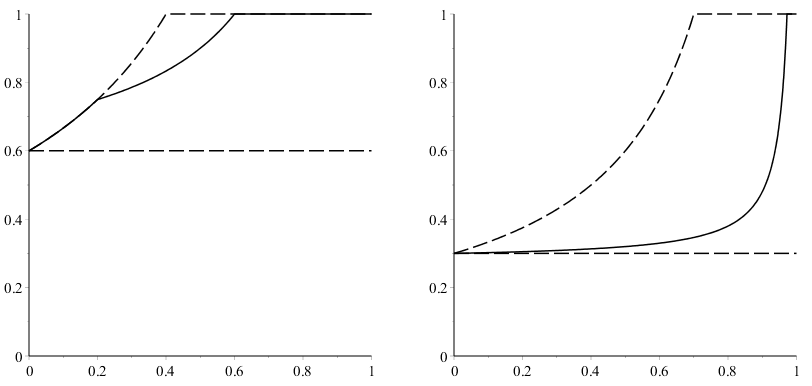}
\caption{Two plots showing improvements on the general upper bound from \cite[Proposition 3.1]{Spectra16}. In both cases the weak separation property is assumed to fail, rendering the Assouad dimension equal to 1. On the left we have chosen $t=0.5, \, \overline{\dim}_\mathrm{B} F=0.6, \, s=0.7$ and on the right we have chosen $t=0.28, \, \overline{\dim}_\mathrm{B} F= s=0.3$.   The familiar upper and lower bounds on the Assouad spectrum from \cite[Proposition 3.1]{Spectra16} are shown with dashed lines.\label{selfsimeg}}
\end{figure}

\subsection{Proof of Theorem \ref{SSmain}}

Let $x \in F$, $R \in (0,1)$ and $\theta \in (0,1)$.  For $\delta \in (0,1)$, let $\mathcal I(\delta) \subset \mathcal I^*$ be the $\delta$-\emph{stopping}, defined by
\[
\mathcal I(\delta) \ = \  \left\{ \mathbf{i} \in \mathcal I^*  \ : \ \text{Lip}(\mathbf{i}) \leq \delta <  \text{Lip}({\mathbf{i}^\dagger})   \right\}
\]
where $\mathbf{i}^\dagger$ is $\mathbf{i}$ with the final letter removed.  Let
\[
M(x, R) \ = \ \# \left\{ \mathbf{i} \in \mathcal I(R)  \ : \  S_\mathbf{i}(F) \cap B(x,R) \neq \emptyset   \right\}.
\]
All of the sets $S_\mathbf{i}(F)$ contributing to $M(x,R)$ lie completely inside the ball $B\left(x, a R \right)$ for some  fixed constant  $a>1$ which is independent of $R$ and $x$.  This follows from the bounded distortion principle.   Furthermore, the sets $S_\mathbf{i}(F)$ contributing to $M(x,R)$ all carry (symbolic) weight $m([\mathbf{i}]) \asymp \text{Lip}(\mathbf{i})^s \asymp R^s$ and so
\[
\mu \left( B\left(x, a R \right) \right)  \  \gtrsim  \   M(x,R) \, R^s.
\]
It also follows from the definition of $t$ that
\[
\mu \left( B\left(x, a R \right) \right)  \  \lesssim  \   (aR)^{t^-}  \,  \lesssim  \   R^{t^-}
\]
where, crucially, the implied constants are  independent of $x$, and therefore 
\[
M(x,R)  \  \lesssim  \   R^{t^- - s}.
\]
We wish to cover $B(x,R)$ by sets of diameter less than or equal to $R^{1/\theta}$ and to do this we iterate the construction inside each of the cylinders $S_\mathbf{i}(F)$ contributing to $M(x,R)$.  More precisely, we consider the stopping $\mathcal I(R^{1/\theta-1})$ and then decompose each set $S_\mathbf{i}(F)$ individually into the sets 
\[
\left\{ S_{\mathbf{i} \mathbf{j} }(F)  \ : \  \mathbf{j} \in  \mathcal I\left(R^{1/\theta-1} \right)  \right\}.
\]
All of these sets have diameters  $\lesssim R^{1/\theta}$ and so can themselves be covered by $\lesssim 1$ many sets of diameter less than or equal to $R^{1/\theta}$.  We claim that
\[
\left\lvert \mathcal I\left(R^{1/\theta-1} \right) \right\rvert \ \lesssim  \ R^{-s^+(1/\theta-1)}.
\]
To see this note that
\begin{eqnarray*}
\left\lvert \mathcal I\left(R^{1/\theta-1} \right) \right\rvert  R^{s^+(1/\theta-1)}  \   \asymp  \   \sum_{\mathbf{j} \in  \mathcal I\left(R^{1/\theta-1} \right)}\text{Lip}(\mathbf{j})^{s^+}    \ \leq \  \sum_{k =1}^\infty  \sum_{\mathbf{j} \in  \mathcal I^k }\text{Lip}(\mathbf{j})^{s^+}    \ < \  \infty.
\end{eqnarray*}
The fact that the final term is finite follows since
\[
\limsup_{k \to \infty} \left( \sum_{\mathbf{j} \in  \mathcal I^k }\text{Lip}(\mathbf{j})^{s^+} \right)^{1/k} \ = \  \exp(P(s^+)) < 1
\]
by the definition of $s$.  Thus
\begin{eqnarray*}
N\left(B(x,R) \cap F, \ R^{1/\theta} \right)  \ \lesssim \ M(x,R) \left\lvert \mathcal I\left(R^{1/\theta-1} \right) \right\rvert  &\lesssim&  R^{t^- - s} \  R^{-s^+(1/\theta-1)} \\ \\
&\leq&  R^{-(s^+/\theta - t^-)}
\end{eqnarray*}
and so directly from the definition and then letting $t^{-} \nearrow t$ and $s^+ \searrow s$ we conclude that
\[
\dim_{\mathrm{A}}^\theta F \ \leq \ \frac{s/\theta - t}{1/\theta - 1} \ = \   \frac{s - t \theta}{1 - \theta} 
\]
as required.

\section{Mandelbrot percolation} \label{MandelbrotSect}

Mandelbrot percolation, first appearing in the works of Mandelbrot in the 1970s as a model for intermittent turbulence \cite{Mandelbrot}, is one of the most well-studied and famous random fractal constructions and can be defined as follows.  Fix $n \geq 2$,  $d \in \mathbb{N}$ and $p \in (0,1)$.  Let $D_0=[0,1]^d$ and divide $D_0$ into $n^d$ identical closed smaller cubes by dividing each side  into $n$ equal length closed intervals. For each of the $n^d$ small cubes either `select it' with probability $p$ or `reject it' with probability $(1-p)$.  Do this independently for each cube and let the union of all the selected cubes be denoted by $D_1$.  Assuming $D_1$ is not empty, then we repeat the `dividing and selecting'  process for each of the cubes making up $D_1$ \emph{independently}.  This process is repeated inductively and, assuming non-extinction, we obtain a nested  sequence of compact sets $D_n$.  The intersection of all $D_n$ is denoted by
\[
F=\bigcap_{n=0}^{\infty}D_n
\]
and is a randomly generated compact subset of the unit cube.  The random set $F$ is the set of interest and is the known as the limit set of  Mandelbrot percolation.

\begin{figure}[H] 
	\centering
	\includegraphics[width=125mm]{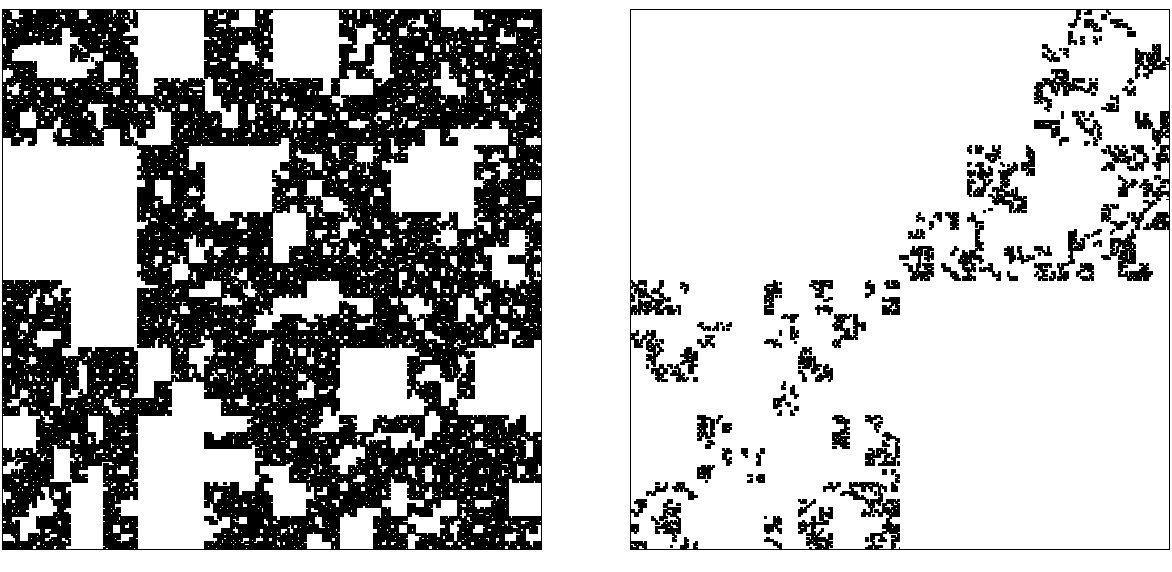}
\caption{Two examples of the limit sets of Mandelbrot percolation.  On the left $n=2$ and $p=0.9$ and on the right $n=2$ and $p=0.7$.\label{percolation}}
\end{figure}

It is well known that if $p>\frac{1}{n^d}$, then there is a positive probability of non-extinction and, conditioned on non-extinction, almost surely
\[
\dim_\mathrm{B} F=\dim_\mathrm{H} F=\frac{\log pn^d}{\log n}.
\]
It can be derived from the work of J\"arvenp\"a\"a-J\"arvenp\"a\"a-Mauldin \cite{percolationJJM} on upper and lower porosities that if $p>\frac{1}{n^d}$ then, conditioned on non-extinction, almost surely
\[
\dim_\mathrm{A} F=d \qquad \text{and} \qquad \dim_\mathrm{L} F=0 
\]
which are notably independent of $p$.  See also  Fraser-Miao-Troscheit \cite{fraserrandom}.  Our main result in this section gives the almost sure Assouad spectrum of Mandelbrot percolation.

\begin{thm}\label{MP}
Suppose $p>\frac{1}{n^d}$.  Then, conditioned on non-extinction, almost surely 
\[
\dim_\mathrm{A}^{\theta} F=\dim_\mathrm{B} F=\frac{\log pn^d}{\log n}
\]
for all $\theta\in (0,1)$.
\end{thm}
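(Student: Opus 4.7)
The plan is to establish the two inequalities $\dim_\mathrm{A}^\theta F \geq s$ and $\dim_\mathrm{A}^\theta F \leq s$, where $s := \tfrac{\log pn^d}{\log n}$, valid almost surely and simultaneously for every $\theta \in (0,1)$ on the non-extinction event. The lower bound is essentially free: on a single full-measure event we have $\overline{\dim}_\mathrm{B} F = s$, and the general inequality $\dim_\mathrm{A}^\theta F \geq \overline{\dim}_\mathrm{B} F$ from Property (1) of the preliminaries then delivers the required lower bound uniformly in $\theta$. Only the matching upper bound is non-trivial, and I would attack it via the Galton-Watson structure underlying Mandelbrot percolation.

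For the upper bound I discretise. Fix $\theta \in (0,1)$ and a small $R$, and set $k_1 = \lceil -\log R/\log n \rceil$ and $k_2 = \lceil -\log R^{1/\theta}/\log n \rceil$, so that $k_2 - k_1 \asymp k_1(1-\theta)/\theta$. A ball $B(x,R)$ meets only $O(1)$ level-$k_1$ cubes of the construction. For such a cube $C$, let $N_C(k_2)$ denote the number of its level-$k_2$ descendant cubes that still appear in $F$; these provide a cover of $C \cap F$ by sets of diameter $\asymp R^{1/\theta}$, so
\[
N\!\bigl(B(x,R) \cap F,\,R^{1/\theta}\bigr) \ \lesssim\ \max_C N_C(k_2).
\]

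The probabilistic heart of the argument is that, conditional on $C$ surviving to level $k_1$, the descendant counts $(N_C(k_1+j))_{j \geq 0}$ form a supercritical Galton-Watson process with $\mathrm{Binomial}(n^d,p)$ offspring of mean $m = pn^d = n^s$. Because the offspring is bounded, the Kesten-Stigum theorem yields $L^q$ convergence of $Z_j/m^j$ for every $q \geq 1$, and hence the uniform moment bound $E[N_C(k_2)^q] \lesssim_q m^{q(k_2-k_1)}$. Markov's inequality then gives $P(N_C(k_2) > t\, m^{k_2-k_1}) \lesssim_q t^{-q}$, and summing over the $n^{dk_1}$ level-$k_1$ cubes with $t = n^{\delta k_1}$ produces
\[
P\!\left(\max_C N_C(k_2) > n^{\delta k_1}\, m^{k_2-k_1}\right) \ \lesssim_q\ n^{(d-\delta q)k_1}.
\]
Choosing any $q > d/\delta$ makes this summable in $k_1$, so Borel-Cantelli delivers, almost surely, $\max_C N_C(k_2) \leq n^{\delta k_1}\, m^{k_2-k_1}$ for all sufficiently large $k_1$. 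Inserting back into the covering bound and dividing by $\log(R/R^{1/\theta})$ translates to $\dim_\mathrm{A}^\theta F \leq s + \delta\theta/(1-\theta)$ almost surely.

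To pass from ``each fixed $\theta$ almost surely'' to ``all $\theta$ almost surely'', I would intersect over a sequence $\delta_l \to 0$ and a countable dense set of rational $\theta \in (0,1)$ (a countable intersection of full-measure events is still full-measure) and invoke continuity of $\theta \mapsto \dim_\mathrm{A}^\theta F$ from Property (2) of the preliminaries. The main obstacle is balancing the exponential union-bound factor $n^{dk_1}$ against the Markov tail estimate; this forces one to invoke arbitrarily high moments $q$ of the Galton-Watson count, and it is crucial that bounded binomial offspring delivers all $L^q$ moments uniformly in $j$ via Kesten-Stigum. A second-moment bound alone would not close the argument.
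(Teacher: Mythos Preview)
Your proposal is correct and follows essentially the same route as the paper: high-moment control of the Galton--Watson descendant counts, Markov's inequality combined with a union bound over level-$k_1$ cubes, Borel--Cantelli, and then continuity of the spectrum over rational $\theta$ to get the result for all $\theta$ simultaneously. The only notable difference is the source of the moment bound: the paper proves directly (Theorem~\ref{BR}) that $\mathbb{E}[Y_n^N]$ is a polynomial of degree $N$ in $\mu^n$, whereas you invoke $L^q$-boundedness of $Z_j/m^j$ (which you attribute to Kesten--Stigum; strictly speaking Kesten--Stigum is the $x\log x$ criterion for $L^1$, and the $L^q$ statement for bounded offspring is a separate, easier, martingale fact---but the conclusion you need is certainly true). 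Your union bound over all $n^{dk_1}$ grid cubes is slightly cruder than the paper's, which sums against $\mathbb{E}[N(s)]=n^{Bs}$, but since you may take $q$ arbitrarily large this costs nothing.
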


Surprisingly, the Assouad spectrum \emph{does} depend on $p$ and does not reach the Assouad dimension.  Since the spectrum is constantly equal to the (upper) box dimension, Mandelbrot percolation provides another family of examples showing that the lower bound in \cite[Proposition 3.1]{Spectra16} is sharp.

 There are many other natural and important methods for producing random fractals and it would be interesting to see if randomness tends to lead to similar behaviour more generally; i.e., the spectrum not reaching the Assouad dimension or even being constantly equal to the box dimension.  One  possible interpretation of this is as follows. Once you introduce randomness, the Assouad dimension is generically  very large because almost surely you eventually see extremal behaviour.  However, to the Assouad spectrum to be large, one needs to control (in some precise sense) how long you have to wait to see the extremal behaviour.   

\begin{ques}
What is the Assouad spectrum for natural `random fractals', for example,  the graph of fractional Brownian motion?
\end{ques}

\subsection{Branching processes}

Let $X$ be a non-negative integer-valued random variable with finite $k$-th moment for any $k\geq 0$, that is, $\mathbb{E}[X^k]<\infty$ for any $k\geq 0$. We can define a stochastic process $Y_n$ $(n\geq 0)$ by letting
$$Y_0=1$$
and for any $n\geq 1$, we define
\[
Y_{n+1}=\sum_{i=1}^{Y_n}X_i,
\]
where $X_i$ are independent identically distributed random variables with the same distribution as $X$. We adopt the convention that $\sum_{i=1}^{0}X_i=0$.  The process $Y_n$ is known as a \emph{Galton-Watson process}.  We will need the following result, which expresses the $k$th moments of $Y_n$ as an explicit polynomial.  Although the result for the first moment is well-known, we could not find a reference for the $k$th moment and so include the details for completeness.

\begin{thm}\label{BR}
For any positive integers $n,k$ we have: $$\mathbb{E}[Y_n^k]=\sum_{i=1}^{k}a_{ki}\mu^{in}$$ where $\mu=\mathbb{E}[X]$ and the coefficients  $a_{ki}$ can be computed explicitly.
\end{thm}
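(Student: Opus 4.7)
The plan is to induct on $k$, using the standard conditioning identity for a Galton-Watson process. The base case $k = 1$ is classical: $\mathbb{E}[Y_{n+1} \mid Y_n] = \mu Y_n$, whence $\mathbb{E}[Y_n] = \mu^n$ and $a_{11} = 1$.

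For the inductive step I would first establish an auxiliary polynomial identity: for i.i.d.\ random variables $X_1, X_2, \dots$ distributed as $X$ (with all moments finite), and for every non-negative integer $N$,
\[
\mathbb{E}\left[\left(\sum_{i=1}^N X_i\right)^k\right] = \sum_{j=1}^k b_{k,j}\, N^j,
\]
where the coefficients $b_{k,j}$ depend only on the moments $\mathbb{E}[X^\ell]$ for $\ell \leq k$, and $b_{k,k} = \mu^k$. This is a routine multinomial expansion: grouping index tuples $(i_1,\dots,i_k) \in \{1,\dots,N\}^k$ by the set partition they induce on $\{1,\dots,k\}$ (into, say, $j$ blocks), each class contributes a falling factorial $N(N-1)\cdots(N-j+1)$ times a product of $X$-moments determined by the block sizes. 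Converting falling factorials to ordinary powers yields the stated polynomial form, and the leading coefficient $\mu^k$ arises uniquely from the finest partition, where all $k$ indices are distinct.

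Applying this identity conditionally on $Y_n$ (using that the offspring variables generating $Y_{n+1}$ are independent of the history up to generation $n$) and then taking expectations gives the recurrence
\[
\mathbb{E}[Y_{n+1}^k] = \mu^k\, \mathbb{E}[Y_n^k] + \sum_{j=1}^{k-1} b_{k,j}\, \mathbb{E}[Y_n^j].
\]
By the inductive hypothesis, each $\mathbb{E}[Y_n^j]$ with $j < k$ is a linear combination of $\mu^{in}$ with $i \le j < k$, so the recurrence simplifies to
\[
\mathbb{E}[Y_{n+1}^k] = \mu^k\, \mathbb{E}[Y_n^k] + \sum_{i=1}^{k-1} c_i\, \mu^{in}
\]
for explicit constants $c_i$ expressible in terms of the $b_{k,j}$ and the previously determined $a_{j,r}$. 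This is a first-order linear recurrence in $n$; since in the regime of interest the eigenvalues $\mu, \mu^2, \dots, \mu^k$ are pairwise distinct, each forcing term $c_i \mu^{in}$ admits the particular solution $\frac{c_i}{\mu^i - \mu^k}\mu^{in}$, and adding the homogeneous solution $A\, \mu^{kn}$ and matching the initial value $\mathbb{E}[Y_0^k] = 1$ determines all constants and closes the induction with $\mathbb{E}[Y_n^k] = \sum_{i=1}^k a_{k,i}\, \mu^{in}$.

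The only real obstacle is the combinatorial bookkeeping inside the auxiliary identity; no delicate probabilistic input is needed beyond independence and the moment hypothesis. A minor caveat is the degenerate case $\mu = 1$, where two eigenvalues in the recurrence collide and polynomial-in-$n$ correction factors would appear in the general solution; this does not arise in the intended application to supercritical Mandelbrot percolation, where $\mu = p n^d > 1$ guarantees distinct eigenvalues and hence the clean form stated in the theorem.
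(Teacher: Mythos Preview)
Your proposal is correct and follows essentially the same route as the paper: induction on $k$, a multinomial expansion of $\mathbb{E}\big[(\sum_{i=1}^{Y_n} X_i)^k \mid Y_n\big]$ to obtain a polynomial in $Y_n$ with leading coefficient $\mu^k$, and then solving the resulting first-order linear recurrence using the inductive hypothesis on the lower-order moments. Your explicit mention of the $\mu=1$ degeneracy is a helpful addition that the paper's geometric-sum computation also tacitly requires but does not flag.
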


\begin{proof}
We will proceed by induction, first noting the well-known case when $k=1$.  For any $n\geq 0$ we have
$$\mathbb{E}[Y_{n+1}]=\mathbb{E}\left[\mathbb{E}\left[\sum_{i=1}^{Y_n}X_i\bigg\vert Y_n\right]\right]=\mathbb{E}[X]\mathbb{E}[Y_n]=\mu \mathbb{E}[Y_n]=\dots=\mu^{n+1}\mathbb{E}[Y_0]=\mu^{n+1}.$$
Suppose that for any $n$ we have
\begin{equation} \label{ind11}
\mathbb{E}[Y_n^k]=\sum_{i=1}^{k}a_{ki}\mu^{in}
\end{equation}
for $k<N$ with $N>0$ an integer.  Then
$$\mathbb{E}[Y_{n+1}^{k+1}]=\mathbb{E}\left[\mathbb{E}\left[\left(\sum_{i=1}^{Y_n}X_i\right)^{k+1}\bigg|Y_n\right]\right]$$
and we can expand the
$$\left(\sum_{i=1}^{Y_n}X_i\right)^{k+1}$$
into polynomial terms of the  following form:
$$X_{i_1}^{k_1}X_{i_2}^{k_2}\dots X_{i_s}^{k_s}$$
where $s\leq k+1$ is an integer, $i_1,\dots ,i_s$ are $s$ different integers from $1$ to $Y_n$,  and $k_1,\dots ,k_s$ are positive integers which sum to $k+1$.  If we write  $\mathbb{E}[X^m]=\mu_m$, then by independence we get
$$\mathbb{E}[X_{i_1}^{k_1}X_{i_2}^{k_2}\dots X_{i_s}^{k_s}]=\mu_{k_1}\mu_{k_2}\dots \mu_{k_s}$$ 
and so the expectations do not really depend on the  $i$-s but rather on the $k$-s.  The expansion of
\[
\mathbb{E}\left[\left(\sum_{i=1}^{Y_n}X_i\right)^{k+1}\bigg| Y_n\right]
\]
can be written as the sum of terms of the form:
$$\left({Y_n \choose s}s!\right)\mu_{k_1}\mu_{k_2}\dots \mu_{k_s}$$
where the binomial coefficients take the value $0$ when $s>Y_n$. We can arrange this sum with respect to the powers of $Y_n$ since the binomial coefficients are polynomials in $Y_n$.  We obtain an expression of the form
$$\mathbb{E}\left[\left(\sum_{i=1}^{Y_n}X_i\right)^{k+1}\bigg| Y_n\right]=\sum_{i=0}^{k+1}b_{i}Y_n^i$$
and it is clear that $b_{k+1}=\mu^{k+1}$ since there is only one way to write $k+1$ as the sum of $k+1$ positive integers, and the corresponding value of $\mu_{k_1}\dots \mu_{k_s}$ is $\mu^{k+1}$.  We then have
$$\mathbb{E}\left[\left(\sum_{i=1}^{Y_n}X_i\right)^{N}\bigg| Y_n \right]=\sum_{i=0}^{N}b_{i}Y_n^i
=\sum_{i=0}^{N-1}b_{i}Y_n^i+\mu^{N}Y_n^{N}.$$
We will write the expectation of the first term on the right side as 
\[
K(n)=\mathbb{E}\left[\sum_{i=1}^{N-1}b_{i}Y_n^i\right]=\sum_{i=1}^{N-1}b_{i}\mathbb{E}[Y_n^i].
\]
Then
\begin{eqnarray*}
\mathbb{E}[Y_{n}^N] &=& \mathbb{E}\left[\mathbb{E}\left[\left(\sum_{i=1}^{Y_{n-1}}X_i\right)^{N}\bigg|Y_{n-1}\right] \right] \\ \\
 &=& \sum_{i=0}^{N}b_{i}\mathbb{E}[Y_{n-1}^i]\\ \\
 &=& K(n-1)+\mu^{N}\mathbb{E}[Y_{n-1}^{N}] \\ \\
 &=& K(n-1)+\mu^{N}K(n-2)+\mu^{2N}K(n-3)+\dots +\mu^{N(n-1)}\mathbb{E}[Y_1^N] \\ \\
 &=& K(n-1)+\mu^{N}K(n-2)+\mu^{2N}K(n-3)+\dots +\mu^{N(n-1)}\mu_N \\ \\
 &=& \sum_{i=1}^{N-1}b_i\left(\mathbb{E}[Y_{n-1}^i]+\mu^{N}\mathbb{E}[Y_{n-2}^i]+\dots +\mu^{N(n-2)}\mathbb{E}[Y_{1}^i]\right)+\mu^{N(n-1)}\mu_N \\ \\
 &=& \sum_{i=1}^{N-1}\sum_{j=1}^{i}b_ia_{ij}\left(\mu^{j(n-1)}+\mu^{j(n-2)+N}+\dots +\mu^{j+N(n-2)}\right) + \mu^{N(n-1)}\mu_N \\ \\
 &=& \sum_{i=1}^{N-1}\sum_{j=1}^{i}b_ia_{ij}\frac{\mu^{j(n-1)}-\mu^{N(n-1)}}{1-\mu^{N-j}}+\mu^{N(n-1)}\mu_N \\ \\
 &=& \sum_{j=1}^{N-1}\sum_{i=j}^{N-1}\frac{b_ia_{ij}}{1-\mu^{N-j}}\mu^{j(n-1)}+\left(\mu_N-\sum_{i=1}^{N-1}\sum_{j=1}^{i}\frac{b_ia_{ij}}{1-\mu^{N-j}}\right)\mu^{N(n-1)}
\end{eqnarray*}

By our  inductive assumption (\ref{ind11}), all the coefficients of $\mu^{jn}$ do not depend on $n$, therefore we can of course write
 $$\mathbb{E}[Y_n^N]=\sum_{i=1}^{N}a_{Ni}\mu^{in}$$
 for $a_{Ni}$ which do not depend on $n$, completing the proof.
\end{proof}

In particular, this theorem shows that $\mathbb{E}[Y_n^N]$ can be written as a polynomial in $\mu^n$ with degree (at most) $N$ and the coefficients of the polynomial depend on $N$ but not $n$.

\subsection{Proof of Theorem \ref{MP}}

Write
\[
B=\frac{\log pn^d}{\log n}
\]
to denote the unique value of the box dimension which occurs with positive probability, and  fix $\theta \in (0,1)$.   Let $N(s)$ be the number of  cubes selected at  step $s \in \mathbb{N}$.  We call the collection of selected cubes at level $s$ the \emph{$s$-layer cubes}. They are of side length $n^{-s}$.

Let $s \in \mathbb{N}$ and $k,M>0$.  We refer to an  $s$-layer cube as a \emph{$G(s,k,M)$-cube} if there are at least $M$ cubes selected at layer $s+[k]$ which lie inside it.

The selected cubes can be considered independently and so  the probability of at least one $G(s,k,M)$-cube existing is
$$1-(1-P(s,k,M))^{N(s)},$$
where $P(s,k,M)$ is the probability that a particular $s$-layer cube is a $G(s,k,M)$-cube which, by the statistical self-similarity of the process,  is the same as the probability that $F$ itself has more than $M$  many $k$-layer cubes, i.e. $P(N(k) \geq M)$.

We can view the construction of the Mandelbrot percolation in the context of the Galton-Watson process discussed in the previous section.  In this case the branching process $Y_n$ is constructed from the Binomial random variable $X\sim B(n^d,p)$, where we note that $\mu=n^B$.  By the Markov inequality, 
$$P(s, k,M) \ = \  P(Y_{[k]}\geq M) \ \leq \ \frac{\mathbb{E}[Y_{[k]}^N]}{M^N}$$
for any $N \in \mathbb{N}$.  Therefore we have the following bound on the  probability that there exists at least one $G(s,k,M)$-cube:
$$1-(1-P(s,k,M))^{N(s)}\ \leq  \ N(s)P(s,k,M) \ \leq \  N(s)\frac{\mathbb{E}[Y_{[k]}^N]}{M^N}.$$
However, what we really want to estimate is the probability that there exists at least one $G(s,k,M)$-cube without knowledge of (conditioning on)   the value of $N(s)$.  This is easily overcome, however, and we obtain the probability
\begin{eqnarray*}
\sum_{z=0}^{n^{sd}}(1-(1-P(s,k,M))^{z})P(N(s)=z) &\leq & \sum_{z=0}^{n^{sd}}z\frac{\mathbb{E}[Y_{[k]}^N]}{M^N}P(N(s)=z) \\ \\
&\leq & \frac{\mathbb{E}[Y_{[k]}^N]}{M^N}\mathbb{E}[N(s)] \\ \\
&=& \frac{\mathbb{E}[Y_{[k]}^N]}{M^N}n^{Bs}.
\end{eqnarray*}
Therefore, for any layer $s$, the probability that there exists at least one $G(s,s/\theta-s,n^{B_1(1/\theta-1)})$-cube can be bounded from above by $$n^{Bs}\frac{\mathbb{E}[Y_{[s(1/\theta-1)]}^N]}{n^{NB_1s(1/\theta-1)}}.$$
Here $B_1>0$ is a non-specified constant.  For large enough $s$, we have from Theorem \ref{BR} that $$\mathbb{E}[Y_{[s(1/\theta-1)]}^N]$$ is a polynomial in $n^{Bs(1/\theta-1)}$ with degree at most $N$.  Therefore we can bound the whole term  from above by $$Ln^{NBs(1/\theta-1)}$$ where $L$ is a positive number depending only on $N$.  We have
$$n^{Bs}\frac{\mathbb{E}[Y_{[s(1/\theta-1)]}^N]}{n^{NB_1s(1/\theta-1)}} \ \leq \  Ln^{s(B+BN(1/\theta-1)-NB_1(1/\theta-1))}$$
which means that if
\begin{eqnarray} \label{BC111}
B+BN(1/\theta-1)-NB_1(1/\theta-1)<0
\end{eqnarray}
then
\[
\sum_{s =1}^\infty  n^{Bs}\frac{\mathbb{E}[Y_{[s(1/\theta-1)]}^N]}{n^{NB_1s(1/\theta-1)}} \ < \ \infty.
\]
The Borel-Cantelli Lemma then implies that almost surely there are only finitely many layers $s$ such that there exists at least one $G(s,s/\theta-s,n^{B_1(1/\theta-1)})$-cube.  This implies that almost surely $\dim_\mathrm{A}^{\theta} F \leq B_1^+$, provided (\ref{BC111}) holds.  Therefore, almost surely, we have
$$\dim_\mathrm{A}^{\theta} F\leq \frac{B\theta}{N(1-\theta)}+B$$
and since $N$ can be chosen arbitrarily large, it follows that  almost surely, $\dim_\mathrm{A}^{\theta} F\leq B$. The reverse inequality holds almost surely, conditioned on non-extinction, due to the general bounds from \cite[Proposition 3.1]{Spectra16} and the classical dimension result for box dimension.  Therefore, for our fixed choice of $\theta$,  almost surely, conditioned on non-extinction,  we have
\[
\dim_\mathrm{A}^{\theta} F= B.
\]
Finally, since $\theta \in (0,1)$ was arbitrary we can obtain this result simultaneously for all $\theta \in (0,1) \cap \mathbb{Q}$ and since we know the Assouad spectrum is continuous in $\theta$ (\cite[Corollary 3.5]{Spectra16}), this is sufficient to move the quantifier on $\theta$ inside the almost sure result, which proves the theorem.

\section{Moran constructions} \label{MoranSection}

Moran constructions are an important and well-studied class of fractals which can be thought of in a similar way to attractors of IFSs but where the rigidity between construction levels has been relaxed in a number of different ways.  Similar to IFS attractors, one defines a set as the intersection of a nested sequence of compact sets made up of increasingly small construction cylinders.  However, the difference is that the relative position, size and number  of `child' cylinders within their `parent' can vary.  This leads to a richer theory, less rigid construction rules and, as we shall see, more complicated dimension results.

A natural way to code the nested construction cylinders is via an infinite rooted tree.  Let $T$ be such a tree with root vertex $v_0$ on level 0 and such that every vertex on level $n \geq 0$ has at least one `child' vertex at level $n+1$.  This guarantees that there are no `degenerate' paths, i.e., paths starting from $v_0$ and terminating at some finite level.  In general, the `level' of a vertex is defined uniquely as the graph distance of the vertex to the root $v_0$. Let $E$ be the set of  edges, $V$ be the set of vertices, $l(v) \in \mathbb{N}$ be the level of vertex $v \in V$, and $V(k)$  be the set of level $k$ vertices.

Let $M(v_0) = M_0 \subseteq \mathbb{R}^d$ be a compact convex set which is equal to the closure of its interior, for example we could take $M_0$ to be a ball or a cube.  For each $v \in V \setminus \{v_0\}$ we assign $c(v) \in (0,1)$ and a set $M(v) \subseteq \mathbb{R}^d$ with diameter $\lvert M(v) \rvert$ such that
\begin{enumerate}
\item For all $v \in V$, $M(v)$ is similar to $M_0$, i.e., the image of $M_0$ under a similarity map.
\item If $v' \in V$ is a child of $v \in V$, then $M(v')  \subseteq M(v)$
\item  If $v' \in V$ is a child of $v \in V$, then $\lvert M(v') \rvert = c(v') \lvert M(v) \rvert $
\item If $v', v'' \in V$ are distinct children of $v \in V$, then the interiors of $M(v')$ and $M(v'')$ are disjoint
\item There is a uniform constant $c>0$ such that $c(v) \geq c$ for all $v \in  \setminus \{v_0\}$.
\end{enumerate}

By construction the sequence $\cup_{v \in V(k)}  M(v)$ is a decreasing sequence of non-empty compact sets and so we may define 
\[
M \ = \ \bigcap_{k=1}^\infty \bigcup_{v \in V(k)}  M(v)
\]
which is itself a non-empty compact, typically fractal, set and is the main object of study in this section. Such sets are often called \emph{Moran sets} or \emph{Moran constructions}.  For convenience, we make a further homogeneity assumption that the contraction constants $c(v)$ are uniform across a given level. More precisely, we assume that for each $k \in \mathbb{N}$, there is a constant $c(k) \in (0,1)$ such that for all $v \in V(k)$ we have $c(v) = c(k)$.  In this situation we call the set $M$ a \emph{homogeneous Moran set}.

Before we can state our main result, we need some more notation.  For $v \in V(k)$ and $l \geq 1$, let $N(v,l)$  be the number of level $k+l$ children of $v$, i.e., the number of vertices one can get to from $v$ by going a further $l$ levels down the tree.  Given $\theta \in (0,1)$ and $k \in \mathbb{N}$ let $l(\theta, k) \in \mathbb{N}$ be defined by
\[
l(\theta, k) \ = \ \max \left\{ l \in \mathbb{N} \ : \ \prod_{i=1}^l  c(i) \geq \prod_{i=1}^k  c(i) ^{1/\theta} \right\}
\]
and observe that $l(\theta,k) \geq k$.

\begin{thm} \label{moranthm}
Let $M$ be a homogeneous Moran set.  Then
\[
\dim_{\mathrm{A}}^\theta M  \ = \  \limsup_{k \to \infty}  \  \max_{v \in V(k)} \, \frac{\log  N(v, l(\theta,k)-k) }{(1-1/\theta)\log  \prod_{i=1}^k  c(i)} 
\]
and
\[
\dim_\mathrm{L}^\theta  M  \ = \  \liminf_{k \to \infty}   \  \min_{v \in V(k)} \, \frac{\log  N(v, l(\theta,k)-k) }{(1-1/\theta)\log  \prod_{i=1}^k  c(i)} .
\]
\end{thm}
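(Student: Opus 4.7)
The plan is to relate the covering numbers $N\bigl(B(x,R)\cap M,\, R^{1/\theta}\bigr)$ appearing in the definition of the spectrum directly to the branching quantities $N(v,l(\theta,k)-k)$ on the tree, using homogeneity to make the identification level-by-level. The argument has two parallel halves: one giving the $\limsup$/$\max$ formula for $\dim_{\mathrm{A}}^\theta M$, and an entirely symmetric one giving the $\liminf$/$\min$ formula for $\dim_{\mathrm{L}}^\theta M$ by replacing upper covering bounds with lower ones throughout. I will focus on the Assouad case.

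Given small $R>0$, I will let $k=k(R)$ be the unique integer satisfying $\prod_{i=1}^{k}c(i)\le R<\prod_{i=1}^{k-1}c(i)$. The uniform lower bound $c(i)\ge c>0$ then guarantees that every level-$k$ cell has diameter $\asymp R$ and, by definition of $l(\theta,k)$, every level-$l(\theta,k)$ cell has diameter $\asymp R^{1/\theta}$. The first key step is a \emph{ball-to-cell reduction}: any level-$k$ cell meeting $B(x,R)$ is contained in an enlarged ball $B(x,aR)$ for an absolute constant $a>0$, and since these cells are congruent (by homogeneity), convex with interior-disjoint images, and each contains an interior ball of radius $\asymp R$ in $\mathbb{R}^d$, a standard volume comparison bounds their number by a constant $K$ depending only on $d$, $c$ and $M_0$. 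The second step is a \emph{cell-to-subcell reduction}: applying the same argument at level $l(\theta,k)$, the $R^{1/\theta}$-covering number of any $M(v)\cap M$ with $v\in V(k)$ is comparable to the number of level-$l(\theta,k)$ descendants of $v$, namely $N(v,l(\theta,k)-k)$. Combining the two steps yields
\[
N\bigl(B(x,R)\cap M,\, R^{1/\theta}\bigr) \ \asymp \ \max\bigl\{N\bigl(v,\,l(\theta,k)-k\bigr) \ : \ v\in V(k),\ M(v)\cap B(x,R)\neq\emptyset\bigr\}
\]
with implied constants independent of $x$ and $R$.

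To finish, I will take logarithms and divide by the positive quantity $(1-1/\theta)\log R$ (both factors are negative), noting that the multiplicative constants contribute only an $O(1/\log R)$ error that vanishes in the limit, and that $\log R$ can be replaced by the comparable $\log\prod_{i=1}^{k}c(i)$ at the cost of another error of the same order. Taking $\limsup$ as $R\to 0$, equivalently $k\to\infty$, and observing that the supremum over $x\in M$ is realised as a maximum over the (finite, bounded-size) set of level-$k$ vertices whose cells meet $B(x,R)$, produces the stated formula. The lower spectrum follows by the mirror argument with lower covering estimates and the matching $\liminf$/$\min$.

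The main obstacle I anticipate is ensuring that the multiplicative constants in both reductions are truly \emph{uniform} in $x$, $R$, $\theta$ and the sequence $(c(i))$. This is precisely where the homogeneity hypothesis, the uniform lower bound $c(i)\ge c$, and the convexity and interior-disjointness of the $M(v)$ all enter critically: without homogeneity one cannot compare cells on a single level by volume; without the uniform lower bound the level $k(R)$ is ill-defined up to comparability; and without convexity the interior-ball estimate fails. Once this uniformity is secured, the remainder of the proof is bookkeeping.
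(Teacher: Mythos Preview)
Your proposal is correct and follows essentially the same route as the paper: define $k=k(R)$ so that level-$k$ cylinders have diameter $\asymp R$, use the interior-disjoint congruent copies plus a volume/doubling argument to bound the number of level-$k$ cylinders meeting an $R$-ball by a uniform constant, decompose each such cylinder into its $N(v,l(\theta,k)-k)$ level-$l(\theta,k)$ descendants of diameter $\asymp R^{1/\theta}$, and for the matching lower bound pick $x$ inside a maximising (resp.\ minimising) cylinder $M(v)$; your discussion of uniformity of the implied constants correctly identifies where homogeneity, the bound $c(i)\ge c$, and the convexity/interior-disjointness hypotheses are used.
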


Since the dimension spectra are local quantities, it is instructive to make the further simplifying assumption that the growth is also uniform across level. More precisely, suppose that for each $k \in \mathbb{N}$, there is a constant $N(k) \geq 1$ such that for all $v \in V(k)$ we have $N(v,1) = N(k)$.  In this setting we call $M$ a \emph{uniformly homogeneous Moran set}. This makes describing examples with particular properties more straightforward as one only has to specify the sequences $c(k)$ and $N(k)$. 

\begin{cor} \label{morancor}
Let $M$ be a uniformly homogeneous Moran set.  Then
\[
\dim_{\mathrm{A}}^\theta M  \ = \  \limsup_{k \to \infty}   \, \frac{\log  \prod_{i=k+1}^{l(\theta, k)}  N(i) }{(1-1/\theta)\log  \prod_{i=1}^k  c(i)} 
\]
and
\[
\dim_\mathrm{L}^\theta  M  \ = \  \liminf_{k \to \infty}  \, \frac{\log  \prod_{i=k+1}^{l(\theta, k)}  N(i) }{(1-1/\theta)\log  \prod_{i=1}^k  c(i)}.
\]
Moreover, if $c(i) = c \in (0,1)$ for all $i \geq 1$, then
\[
\dim_{\mathrm{A}}^\theta M  \ = \  \frac{\theta }{(\theta - 1) \log  c} \limsup_{k \to \infty}   \, \frac{1}{k} \log  \prod_{i=k+1}^{\lfloor k/\theta \rfloor}  N(i)
\]
and
\[
\dim_\mathrm{L}^\theta  M  \ = \  \frac{\theta }{(\theta - 1) \log  c} \liminf_{k \to \infty}   \, \frac{1}{k} \log  \prod_{i=k+1}^{\lfloor k/\theta \rfloor}  N(i).
\]
\end{cor}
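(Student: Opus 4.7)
The plan is to deduce both claims directly from Theorem \ref{moranthm} by inspection.

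\emph{Uniformly homogeneous case.} In this setting, every level-$k$ vertex has exactly $N(k+1)N(k+2)\cdots N(k+l)$ descendants at level $k+l$, since the branching at level $i$ depends only on $i$ and not on the individual vertex. Thus
\[
N(v, l) \ = \ \prod_{i=k+1}^{k+l} N(i) \qquad \text{for every } v \in V(k),
\]
and in particular this quantity is independent of $v$. Consequently the $\max$ and $\min$ over $V(k)$ appearing in Theorem \ref{moranthm} are redundant, and substituting $l = l(\theta,k) - k$ gives the stated formulas at once.

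\emph{Constant contraction case.} When $c(i) = c \in (0,1)$ for all $i$, the defining condition $\prod_{i=1}^l c(i) \geq \prod_{i=1}^k c(i)^{1/\theta}$ reduces to $c^l \geq c^{k/\theta}$, which, because $c < 1$, is equivalent to $l \leq k/\theta$. Hence $l(\theta,k) = \lfloor k/\theta \rfloor$ exactly (including the case when $k/\theta$ is an integer). The denominator becomes
\[
\left(1 - \tfrac{1}{\theta}\right) \log \prod_{i=1}^k c(i) \ = \ \frac{(\theta-1) \, k \log c}{\theta},
\]
which is positive since $\theta < 1$ and $\log c < 0$. Dividing the numerator by this expression and pulling the constant $\theta/((\theta-1)\log c)$ outside the $\limsup$ and $\liminf$, noting that its positivity prevents the two extrema from swapping, yields the claimed simplified formulas.

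No serious obstacle appears; the corollary is a transparent specialization of Theorem \ref{moranthm}. The only point worth double-checking is that the identity $l(\theta,k) = \lfloor k/\theta \rfloor$ is exact rather than merely asymptotic, so no error term needs to be absorbed when passing to the $\limsup$ and $\liminf$.
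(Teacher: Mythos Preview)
Your proposal is correct and mirrors the paper's (implicit) treatment: the corollary is stated without proof as a direct specialisation of Theorem~\ref{moranthm}, and the two observations you make---that uniform branching renders the $\max/\min$ over $V(k)$ redundant, and that $c(i)\equiv c$ forces $l(\theta,k)=\lfloor k/\theta\rfloor$ with the constant $\theta/((\theta-1)\log c)>0$ factoring out cleanly---are exactly the content needed. One cosmetic point: with the paper's convention $N(v,1)=N(k)$ for $v\in V(k)$, the product should strictly read $\prod_{i=k}^{k+l-1}N(i)$ rather than $\prod_{i=k+1}^{k+l}N(i)$, but this index shift is harmless (and indeed matches the corollary as stated).
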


\subsection{Proof of Theorem \ref{moranthm}}

Let $R, \theta \in (0,1)$ and define $k \in \mathbb{N}$ by
\[
k \ = \ \max \left\{ l \in \mathbb{N} \ : \ \prod_{i=1}^l  c(i) \geq R \right\}.
\]
Observe that level $k$ cylinders $M(v)$ are images of $M_0$ scaled down by similarities with contraction ratio $\asymp R$, where the implied constant depends only on the constant $c$ appearing in condition (5) above.  This guarantees that any $R$-ball centered in $M$ will intersect $\lesssim 1$ many level $k$ cylinders.  This  uses the fact that each level $k$ cylinder contains a ball with radius $\gtrsim R$ where the implied constant depends only on $M_0$ and by construction these balls may be taken to be disjoint.  The conclusion then follows from the doubling property of Euclidean space.   Furthermore, a particular level $k$ cylinder $M(v)$ may be broken down into precisely $N(v, l(\theta,k)-k)$ many level $l(\theta,k)$ cylinders. Each of these cylinders is a scaled down copy of $M_0$ by a factor of
\[
\prod_{i=1}^{ l(\theta,k)} c(i) \asymp \prod_{i=1}^k  c(i) ^{1/\theta} \asymp R^{1/\theta}
  \]
  and may therefore be covered by $\lesssim 1$ many open balls of diameter $R^{1/\theta}$.   Thus we may cover any $R$-ball with
\[
\lesssim \max_{v \in V(k)} N(v, l(\theta,k)-k)
\]
many $R^{1/\theta}$-balls.  This shows that
\begin{eqnarray*}
\dim_{\mathrm{A}}^\theta F   \  = \  \limsup_{R \to 0} \  \sup_{x \in M} \, \frac{\log  N\big(B(x,R) \cap M,R^{1/\theta}\big) }{(1-1/\theta)\log R} &\leq& \limsup_{k \to \infty} \   \frac{\log \left( \max_{v \in V(k)} N(v, l(\theta,k)-k) \right)}{(1-1/\theta)\log \left( \prod_{i=1}^k c(i) \right)}
\end{eqnarray*}
which is the desired upper bound. To prove the lower bound, choose a sequence of $ k \to \infty$ and corresponding maximising $v \in V(k)$ realising the $\limsup$ above.  Observe that the cover constructed above is optimal up to a constant since any ball of diameter $R^{1/\theta}$ used in the cover can cover at most $\lesssim 1$ many level $l(\theta,k)$ cylinders.  This again uses the fact that each level $l(\theta,k)$ cylinder contains a ball with radius $ \gtrsim R^{1/\theta}$ where the implied constant depends only on $M_0$ and by construction these balls may be taken to be disjoint. Therefore choosing $x \in M(v) \cap M$ we have
\[
 N\big(B(x,R) \cap M,R^{1/\theta}\big) \ \gtrsim \ N(v, l(\theta,k)-k)  \ = \ \max_{v \in V(k)} N(v, l(\theta,k)-k).
\]
This proves the lower bound and completes the proof for the Assouad spectrum.  The analogous result for the lower spectrum is proved similarly and we omit the details.

\subsection{Examples and applications}  \label{MoranExamples}

Let $M_0 = [0,1]$,  $c(k) \equiv 1/2$ and $N(k) \in \{1,2\}$ for all $v \in V$.  For example, we can think of the construction cylinders $M(v)$ as just the closed dyadic intervals.  Let
\[
T(k, \theta) = \#\left\{i=k+1, \dots, \lfloor k/\theta \rfloor \ : \ N(i)= 2 \right\}.
\]
It follows from Corollary \ref{morancor} that
\[
\dim_{\mathrm{A}}^\theta M  \ = \  \limsup_{k \to \infty}   \, \frac{T(k, \theta)}{k/\theta-k}
\]
and
\[
\dim_\mathrm{L}^\theta  M  \ = \   \liminf_{k \to \infty} \, \frac{T(k, \theta)}{k/\theta-k}.
\]
This gives us an easy way of building interesting examples simply by specifying the sequence $N(i)$.

Note that the upper and lower box dimensions of $M$ are given by the, more familiar, upper and lower \emph{asymptotic densities}:
 \[
\overline{\dim}_{\mathrm{B}}M  \ = \  \limsup_{k \to \infty}   \, \frac{\#\left\{i=1, \dots, k \ : \ N(i)= 2 \right\}}{k} 
\]
and
 \[
\underline{\dim}_\mathrm{B} M  \ = \  \liminf_{k \to \infty}   \, \frac{\#\left\{i=1, \dots, k \ : \ N(i)= 2 \right\}}{k} 
\]
and the Assouad and lower dimensions are given by the upper and lower \emph{Banach densities}:
 \[
\dim_\mathrm{A} M  \ = \  \limsup_{(k-l) \to \infty}   \, \frac{\#\left\{i=l, \dots, k \ : \ N(i)= 2 \right\}}{k-l+1} 
\]
and
 \[
\dim_\mathrm{L} M  \ = \  \liminf_{(k-l) \to \infty}   \, \frac{\#\left\{i=l, \dots, k \ : \ N(i)= 2 \right\}}{k-l+1}.
\]
The density functions we study are clearly related to the asymptotic and Banach densities but seem not to be so well-studied.  Generally, for $\lambda>1$ one might define the \emph{upper $\lambda$-tail density} of a set $X \subset \mathbb{N}$ as
\[
\overline{D}(X, \lambda) \ = \  \limsup_{k \to \infty}   \, \frac{\# X \cap [k, \lambda k]}{\lambda k-k}
\]
and the \emph{lower $\lambda$-tail density} as
\[
\underline{D}(X, \lambda) \ = \  \liminf_{k \to \infty}   \, \frac{\# X \cap [k, \lambda k]}{\lambda k-k}.
\]
In the same way that our spectra give finer information regarding the geometric scaling and homogeneity of the fractal set in question, the $\lambda$-tail densities would give finer information (via upper and lower spectra) regarding the density of the set $X$ within $\mathbb{N}$.  Due to the relationships between the $\lambda$-tail densities and our dimension spectra, we can deduce several basic properties of the tail densities immediately from the basic properties of the dimension spectra.

\begin{prop} \label{taildensity}
Let $X \subseteq \mathbb{N}$  and write  $\underline{D}(X)$, $\overline{D}(X)$, $\underline{B}(X)$, $\overline{B}(X)$, for the lower and upper asymptotic densities and the lower and upper Banach densities respectively.
\begin{enumerate}
\item The upper and lower $\lambda$-tail densities are continuous as functions of $\lambda>1$
\item For any $\lambda>1$ we have
\[
\overline{D}(X) \ \leq \  \overline{D}(X, \lambda)  \ \leq \    \frac{\lambda \, \overline{D}(X)}{\lambda - 1}  \,  \wedge \,  \overline{B}(X)
\]
\item For any $\lambda>1$ we have
\[
 \frac{\lambda \, \underline{D}(X)-1}{\lambda - 1} \,  \vee \,  \underline{B}(X)\ \leq \  \underline{D}(X, \lambda)  \ \leq \   \underline{D}(X) 
\]
\end{enumerate}
\end{prop}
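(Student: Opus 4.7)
The plan is to translate every statement about densities of $X \subseteq \mathbb{N}$ into a statement about the dimension spectra of a carefully chosen Moran set, and then invoke the general results of Section \ref{MoranExamples} together with the basic properties of the spectra recorded in Section 2. Specifically, I will associate to $X$ the uniformly homogeneous Moran set $M \subset [0,1]$ obtained by taking $M_0=[0,1]$, $c(k)\equiv 1/2$, and $N(k)=2$ if $k \in X$ and $N(k)=1$ otherwise, with children placed in the two dyadic halves so that interiors are disjoint. This is precisely the kind of construction treated in the first part of Section \ref{MoranExamples}, and a direct substitution into Corollary \ref{morancor} (after reparametrising $\lambda = 1/\theta$ and absorbing the $\lfloor k/\theta \rfloor$-versus-$k/\theta$ discrepancy in the limit) gives the dictionary
\[
\overline{\dim}_{\mathrm{B}} M = \overline{D}(X), \quad \underline{\dim}_{\mathrm{B}} M = \underline{D}(X), \quad \dim_{\mathrm{A}} M = \overline{B}(X), \quad \dim_{\mathrm{L}} M = \underline{B}(X),
\]
\[
\dim_{\mathrm{A}}^{1/\lambda} M = \overline{D}(X,\lambda), \qquad \dim_{\mathrm{L}}^{1/\lambda} M = \underline{D}(X,\lambda).
\]
The verification of the last two equalities is just $\prod_{i=1}^k c(i) = 2^{-k}$ and $\prod_{i=k+1}^{\lfloor k/\theta\rfloor} N(i) = 2^{\#X\cap(k,k/\theta]}$ plugged into Corollary \ref{morancor}, and the first four are standard (the Banach density identifications in particular follow from a ``centred at the correct vertex'' observation in the definition of $\dim_{\mathrm{A}}$ and $\dim_{\mathrm{L}}$).

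With the dictionary in hand, claim (1) is immediate from property (2) of Section 2 (continuity of the Assouad and lower spectra in $\theta$), combined with the fact that $\lambda \mapsto 1/\lambda$ is a homeomorphism of $(1,\infty)$ onto $(0,1)$. The chain of inequalities in (2), namely $\overline{D}(X)\le \overline{D}(X,\lambda)\le \tfrac{\lambda\overline{D}(X)}{\lambda-1}\wedge \overline{B}(X)$, is exactly the image under the dictionary of
\[
\overline{\dim}_{\mathrm{B}} M \ \le\ \dim_{\mathrm{A}}^\theta M \ \le\ \frac{\overline{\dim}_{\mathrm{B}} M}{1-\theta}\wedge \dim_{\mathrm{A}} M
\]
from property (1) of Section 2, since $1/(1-1/\lambda) = \lambda/(\lambda-1)$. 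The upper half of (3), namely $\underline{B}(X) \vee (\text{something}) \le \underline{D}(X,\lambda) \le \underline{D}(X)$, likewise translates the bound $\dim_{\mathrm{L}} M \le \dim_{\mathrm{L}}^\theta M \le \underline{\dim}_{\mathrm{B}} M$.

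The only piece not covered by a general spectral inequality is the lower bound $\underline{D}(X,\lambda) \ge \tfrac{\lambda\underline{D}(X)-1}{\lambda-1}$, so this will be the main (and only) place where I have to do work beyond invoking Section 2. Happily the argument is elementary: from the trivial decomposition
\[
\#X\cap[k,\lambda k] \ \ge\ \#X\cap[1,\lambda k] \ -\ (k-1),
\]
dividing by $\lambda k - k$ gives
\[
\frac{\#X\cap[k,\lambda k]}{\lambda k - k} \ \ge\ \frac{\lambda}{\lambda-1}\cdot\frac{\#X\cap[1,\lambda k]}{\lambda k} \ -\ \frac{1}{\lambda-1}\cdot\frac{k-1}{k},
\]
and passing to $\liminf$ as $k\to\infty$ yields the desired bound. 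I do not anticipate any serious obstacle in the plan; the main thing to be careful about is making sure the floor-function discrepancies between $\lambda k$, $k/\theta$, and $\lfloor k/\theta\rfloor$ are negligible in the limit, and that the Moran construction described genuinely satisfies the hypotheses of Section \ref{MoranSection} (in particular that the uniform lower bound $c(v)\ge c=1/2$ holds and that the cylinders have disjoint interiors), both of which are straightforward checks.
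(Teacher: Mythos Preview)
Your approach is correct and, for parts (1) and (2), essentially identical to the paper's: both build the Moran set with $c(k)\equiv 1/2$ and $N(k)=1+\mathbf{1}_X(k)$, invoke the dictionary from Section~\ref{MoranExamples} to identify the tail densities with the dimension spectra under $\lambda=1/\theta$, and then read off continuity and the bounds from the general properties in Section~2.

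The only genuine difference is in part (3). The paper dispatches all of (3) in one stroke via the complementation identity
\[
\underline{D}(X,\lambda)\;=\;1-\overline{D}(\mathbb{N}\setminus X,\lambda),
\]
together with $\underline{D}(X)=1-\overline{D}(\mathbb{N}\setminus X)$ and $\underline{B}(X)=1-\overline{B}(\mathbb{N}\setminus X)$, which reduces (3) entirely to (2) applied to $\mathbb{N}\setminus X$. You instead obtain $\underline{B}(X)\le\underline{D}(X,\lambda)\le\underline{D}(X)$ from the lower-spectrum bounds in Section~2 via the dictionary, and then prove the remaining inequality $\underline{D}(X,\lambda)\ge(\lambda\,\underline{D}(X)-1)/(\lambda-1)$ by a direct counting argument. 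Your direct argument is valid (noting that $\liminf_k \#X\cap[1,\lambda k]/(\lambda k)\ge\underline{D}(X)$ since it is a liminf along a subsequence), and has the minor advantage of being self-contained; the paper's complementation trick is slicker and avoids any separate computation.
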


\begin{proof}
(1) and (2) follow immediately from \cite[Corollary 3.5, Corollary 3.7, Proposition 3.1]{Spectra16} and setting $\lambda=1/\theta$.  If either were false, there would exist a set $X$ exhibiting their falsehood and choosing the sequence $N(i)$ to be 2 on elements indexed by $X$ and 1 otherwise would yield a Moran construction with spectra contradicting the basic results we have derived.   (3) follows by observing that
\[
\underline{D}(X, \lambda) \ = \ 1 - \overline{D}(\mathbb{N} \setminus X, \lambda)
\]
and applying (2).
\end{proof}

Similar to \cite[Corollary 3.3]{Spectra16}, we see that if the upper asymptotic density is 0 then the upper $\lambda$-tail density is identically 0 and, furthermore, if the lower asymptotic density is 1 then the lower $\lambda$-tail density is identically 1.

Returning to the issue of examples, first we give an example illustrating the sharpness of \cite[Proposition 3.1, 3.10]{Spectra16}. 

Let $N(i)$ be a sequence of $1$s and $2$s where the $2$s are placed with $\lambda$-tail density equal to 1/2 for all $\lambda>1$, but that there are arbitrarily long sequences of 1s and 2s (thus making the upper and lower Banach densities  1 and 0 respectively).  One way to build such a sequence would be to begin with the sequence $1,2,1,2,1,2, \dots$ and then for each $n \in \mathbb{N}$  insert a run of $n$ 1s followed by $n$ 2s starting at position $f(n)$ where $f(n)$ is some sequence which grows very fast, for example $n^{(n^n)}$. It follows that
\[
0=\dim_\mathrm{L} M<\dim_\mathrm{L}^\theta M=\dim_\mathrm{B} M=\dim_\mathrm{A}^\theta M=1/2<\dim_\mathrm{A} M=1
\]
and so this gives a simple demonstration that the lower (respectively upper) bounds from \cite[Proposition 3.1, 3.10]{Spectra16} are sharp.

In a different direction, if we let $M^*$ be the set produced by following the construction of $M$ but with the placement of 1s and 2s reversed, then we get
\[
\dim_{\mathrm{A}}^\theta M^*  \ = \  1-\dim_\mathrm{L}^\theta  M
\]
and
\[
\dim_\mathrm{L}^\theta  M^*  \ = \  1- \dim_{\mathrm{A}}^\theta M
\]
with similar formulae relating the Assouad and lower dimensions and the upper and lower box dimensions. This is a very simple observation, but it allows one to build examples with lower spectrum equal to
\[
\dim_\mathrm{L}^\theta  M^* \ = \ \frac{\underline{\dim}_\mathrm{B} M^*-\theta }{1-\theta} \ \vee  \   \dim_\mathrm{L} M^*
\]
which has not been seen yet and tends not to turn up as naturally as the analogous formula for the Assouad spectrum.  Also it means one can produce examples simultaneously for the Assouad and lower spectrum whilst only worrying about the upper densities.

We end this section with a simple recipe for building Moran constructions with interesting spectra, but we leave the details of the calculations to the interested reader.  Let $t \in [0,1]$ and $\lambda>1$ and choose the sequence $N(i)$ as follows.  Every entry is a 1, apart from for a sequence of blocks $f(k), \dots , [\lambda f( k)]$ where $f(k)$ is a sequence of integers which grows very quickly.  Within each block $f(k), \dots , [\lambda f(k)]$, distribute 2s with density $t$ and as uniformly as possible, i.e. choose $[([\lambda f(k)]-f(k)+1) t ]$ many of the $N(i)$ to be 2, rather than 1.  Simple calculations reveal that for the corresponding Moran construction $M$ we have
\[
\underline{\dim}_\mathrm{B} M =  \dim_\mathrm{L} M =  0, \qquad \qquad   \overline{\dim}_\mathrm{B} M = \frac{t}{\lambda}(\lambda-1), \qquad  \qquad \dim_\mathrm{A} M = t
\]
and for all $\theta \in (0,1)$
\[
 \dim^\theta_\mathrm{A} M = \frac{\frac{t}{\lambda}(\lambda-1)}{1-\theta} \wedge t.
\]
This simple construction allows us to build a variety of Moran constructions with interesting spectra.  For example, we may vary the parameters $t,\lambda$ above to create finitely many Moran constructions, $M_i$, with different spectra of the above form.   We can then  build another Moran  construction, $M$, which is the disjoint union of scaled down copies of the $M_i$.  This can be done by choosing as many first level vertices as there are sets $M_i$ and then mimicking the constructions of each $M_i$ independently within each first level child.  Finally, using the fact that the Assouad spectrum is stable under taking finite unions, see \cite[Proposition 4.1]{Spectra16}, the Assouad spectrum of $M$ is given by the maximum of the Assouad spectra of the $M_i$.  
 Finally, by `inverting' the construction of each $M_i$ to $M_i^*$ as above, we get a corresponding example with lower spectrum mirroring the Assouad spectrum of the initial example.  Observe that this time the lower spectrum of $M^*$ is given by \emph{minimum} of the lower  spectra of the $M_i^*$, see \cite[Proposition 4.1]{Spectra16}. Some examples of this type are given in Figure \ref{moranfig}, with the details below.  In particular, this recipe allows us to produce examples with the following properties, which we have not observed in any of our previous examples:
\begin{enumerate}
\item An arbitrarily large (but finite) number of phase transitions.
\item An arbitrarily large (but finite) number of disjoint intervals where the spectrum is constant (with different constants).
\item As $\theta$ approaches 1, the spectra approach intermediate values, not equal to any of the familiar dimensions.
\end{enumerate}

\begin{figure}[H]
	\centering
	\includegraphics[width=145mm]{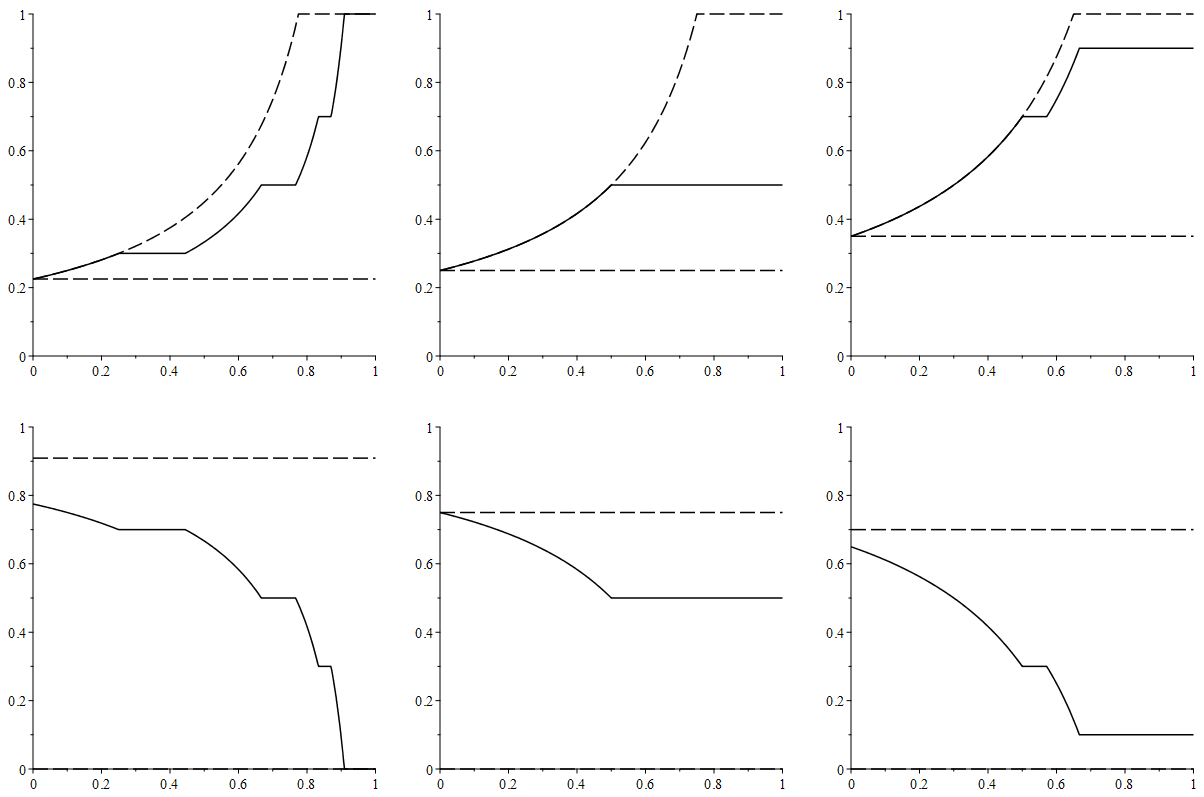}
\caption{Top row:  the Assouad spectra of three different homogeneous Moran constructions.  The general  bounds from  \cite[Proposition 3.1]{Spectra16} are shown as dashed lines.   Bottom row:  the lower spectra of the corresponding Moran construction $M^*$.  The general  bounds from  \cite[Proposition 3.10]{Spectra16} are shown as dashed lines.\label{moranfig}}
\end{figure}

For completeness we provide the precise construction data for the above examples.  The first example is produced with four basic Moran constructions with: $t=1$ and $\lambda=1.1$, $t=0.7$ and $\lambda=1.2$, $t=0.5$ and $ \lambda=1.5$, and $t=0.3$ and $ \lambda=4$, respectively.  The second example is produced with two basic Moran constructions.  The first has  $t=0.5$ and $ \lambda=2$ and the second one is produced by a sequence $N(i)$ which has all 1s apart from arbitrarily long runs of 2s inserted at positions which increase very rapidly.  This guarantees that the upper box dimension is 0, but the Assouad dimension is 1.  The third example  is produced with three basic Moran constructions.  The first two have $t=0.7$ and $\lambda=2$, and   $t=0.9$ and $ \lambda=1.5$, respectively, and the third one is again the construction with Assouad dimension 1 and upper box dimension 0.

\bibliographystyle{amsalpha}
\bibliography{}

\begin{thebibliography}{99}


\bibitem[AT]{sascha}
 J. Angelevska and S. Troscheit.
A dichotomy of self-conformal subsets of $\mathbb{R}$ with overlaps,
\emph{preprint}, (2016),   available at: http://arxiv.org/abs/1602.05821

\bibitem[A]{assouadphd}
P. Assouad.
Espaces m{\'e}triques, plongements, facteurs,
{\em Th\`ese de doctorat d'\'Etat, Publ. Math. Orsay 223--7769, Univ. Paris XI, Orsay}, (1977).



\bibitem[B]{bandt}
 C. Bandt, S. Graf.  Self-similar sets VII.  A characterization of self-similar fractals with positive Hausdorff
  measure, {\it Proc. Amer. Math. Soc.} {\bf 114}, (1992), 995--1001.

\bibitem[Ba]{baranski}
K.~Bara\'nski.
Hausdorff dimension of the limit sets of some planar geometric constructions,
{\em Adv. Math.}, {\bf 210}, (2007), 215--245.



\bibitem[Be]{Bedford}
T. Bedford.
 Crinkly curves, Markov partitions and box dimensions in self-similar sets,
 {\em Ph.D dissertation, University of Warwick}, (1984).



\bibitem[Bo]{Bowen}
R. Bowen.
\emph{Equilibrium states and the ergodic theory of Anosov diffeomorphisms}, Lecture Notes in Math.
470. Berlin: Springer, 1975.

\bibitem[F1]{techniques}
K.~J. Falconer.
 {\em Techniques in Fractal Geometry},
John Wiley, 1997.

\bibitem[F2]{Falconer}
K. J. Falconer.
{\em Fractal Geometry: Mathematical Foundations and Applications},
 John Wiley \& Sons, Hoboken, NJ, 2nd ed., 2003.





\bibitem[Fe]{Feng}
D.-J. Feng.
Gibbs properties of self-conformal measures and the multifractal formalism,
{\em Ergodic Th. Dynam. Syst.} {\bf 27}, (2007), 787--812.

\bibitem[FL]{FengLau}
 D.-J. Feng and K.-S. Lau. Multifractal formalism for self-similar measures with weak separation condition, {\em J. Math. Pures Appl.}, {\bf 92}, (2009), 407--428.

\bibitem[FJS]{fjs}
A. Ferguson, T. Jordan and P. Shmerkin.
The Hausdorff dimension of the projections of self-affine carpets,
\emph{Fund. Math.}, {\bf209}, (2010), 193--213.


\bibitem[Fr]{Fraser}
J.~M. Fraser.
 Assouad type dimensions and homogeneity of fractals,
 {\em Trans. Amer. Math. Soc.}, {\bf 366}, (2014), 6687--6733.

\bibitem[FJ]{FraserJordan}
J. M. Fraser and T. Jordan. 
The Assouad dimension of self-affine carpets with no grid structure,
\emph{preprint}, (2016),   available at: http://arxiv.org/abs/1607.02129 

\bibitem[FMT]{fraserrandom}
J. M. Fraser, J. J. Miao and S. Troscheit. 
The Assouad dimension of randomly generated fractals,
\emph{Ergodic Th. Dyn. Syst. (to appear)},  available at: http://arxiv.org/abs/1410.6949. 

\bibitem[FHOR]{Fraseretal}
J.~M. {Fraser}, A.~M. {Henderson}, E.~J. {Olson} and J.~C. {Robinson}.
 On the Assouad dimension of self-similar sets with overlaps,
 {\em Adv. Math.}, {\bf 273}, (2015), 188--214.

\bibitem[FH]{sponges}
J.~M. Fraser and D. C. Howroyd.
 Assouad type dimensions for self-affine sponges,
 {\em Ann. Acad. Sci. Fenn. Math. (to appear)},  available at: http://arxiv.org/abs/1508.03393.






\bibitem[FY]{Spectra16}
J. M. Fraser and H. Yu.
New dimension spectra: finer information on scaling and homogeneity,
\emph{preprint},  (2016),  available at: http://arxiv.org/abs/1610.02334.

\bibitem[GL]{lalley-gatz}
D. Gatzouras and S.~P. Lalley.
Hausdorff and box dimensions of certain self-affine fractals,
 {\em Indiana Univ. Math. J.}, {\bf 41}, (1992), 533--568.









\bibitem[H]{Hochman} M. Hochman.  On self-similar sets with overlaps
  and inverse theorems for entropy, \emph{Ann. of Math.}, {\bf 180}, (2014), 773--822.


\bibitem[JJM]{percolationJJM}
E. J\"arvenp\"a\"a, M. J\"arvenp\"a\"a, and R. D. Mauldin.
Deterministic and random aspects of porosities,
\emph{Discrete Contin. Dyn. Syst.}, {\bf 8}, (2002), 121--136. 



\bibitem[KR]{kaenmakiassouad2}
A. K\"aenm\"aki and E. Rossi.
Weak separation condition, Assouad dimension, and Furstenberg homogeneity, \emph{to appear in Ann. Acad. Sci. Fenn. Math.}, arXiv:1506.07851.

\bibitem[KP]{kenyon}
R. Kenyon and Y. Peres.
Measures of full dimension on affine-invariant sets,
\emph{Ergodic Theory Dynam. Systems}, {\bf 16}, (1996), 307--323.



\bibitem[L]{LauNgai} K.-S. Lau and S.-M. Ngai.  
Multifractal measures and   a weak separation property, \emph{Adv. Math.} {\bf 141}, (1999),
  45--96.



\bibitem[LLM]{Miao}
B. Li, W.-X. Li and J. J. Miao,
Lipschitz equivalence of McMullen sets,
\emph{Fractals}, {\bf 21}, (2013).



\bibitem[Lu]{Luukkainen}
J. Luukkainen.
Assouad dimension: antifractal metrization, porous sets, and homogeneous measures,
\emph{J. Korean Math. Soc.}, {\bf 35}, (1998), 23--76.



\bibitem[M]{Mackay}
J. M. Mackay.
Assouad dimension of self-affine carpets,
\emph{Conform. Geom. Dyn.}, {\bf 15}, (2011), 177--187.




\bibitem[Ma]{Mandelbrot}
B.B. Mandelbrot.
Intermittent turbulence in self-similar cascades-divergence of high moments and dimension of the carrier,
{\em Journal of Fluid Mechanics}, {\bf 62}, (1974), 331--358.



\bibitem[Mc]{McMullen}
C. McMullen.
 The Hausdorff dimension of general Sierpi\'nski carpets,
 {\em Nagoya Math. J.}, {\bf 96}, (1984), 1--9.







\bibitem[PS]{exists}
Y. Peres and B. Solomyak.
Existence of $L^q$-dimensions and entropy dimension for self-conformal measures, \emph{Indiana Univ. Math. J.}, {\bf 49}, (2000), 1603--1621.



\bibitem[R]{Robinson}
J. C. Robinson.
{\em Dimensions, Embeddings, and Attractors},
Cambridge University Press, (2011).


\bibitem[Ru]{Ruelle}
D. Ruelle.
Repellers for real analytic maps, 
\emph{Ergodic Theory Dynamical Systems} {\bf 2}, (1982), 99--107.



\bibitem[S]{pablo}
P. Shmerkin.
 On Furstenberg's intersection conjecture, self-similar measures, and the $L^q$ norms of convolutions,
 {\em preprint}, (2016), available at: http://arxiv.org/abs/1609.07802.





\bibitem[Z]{Zerner} M. P. W. Zerner.  Weak separation properties for
  self-similar sets, \emph{Proc. Amer. Math. Soc.}, {\bf 124}, (1996),
  3529--3539 .




\end{thebibliography}

\printindex

\end{document}